\documentclass{amsart}
\usepackage[english]{babel}

\usepackage{amssymb} 
\usepackage{mathtools}
\usepackage{amsthm}
\usepackage{amsmath}
\usepackage{thmtools}
\usepackage{xcolor}
\usepackage{cancel}

\usepackage{hyphenat} 
\hyphenation{ma-ni-folds}
\hyphenation{spi-nors}
\usepackage{csquotes}
\usepackage{marginnote} 
\usepackage{hyperref}
\hypersetup{
    final=true,
    plainpages=false,
    pdfstartview=FitV,
    pdftoolbar=true,
    pdfmenubar=true,
    bookmarksopen=true,
    bookmarksnumbered=true,
    breaklinks=true,
    linktocpage,
    colorlinks=true,
    linkcolor=teal,
    urlcolor=teal,
    citecolor=teal,
    anchorcolor=green
  }

\usepackage[
style = alphabetic, 
sorting=nyt, 
url=true,
maxnames=10,
minnames=10, 
datamodel=standard,
maxalphanames=10]{biblatex}
\addbibresource{globalbib.bib}

\DeclareFieldFormat[article]{volume}{\mkbibbold{#1}\addcolon \addspace}
\DeclareFieldFormat[article]{number}{\mkbibparens{#1}}
\renewbibmacro*{in:}{}

\numberwithin{equation}{section}

\theoremstyle{definition}
\newtheorem{definition}{}[section]

\newtheorem{proposition}[definition]{}

\newtheorem{theorem}[definition]{}

\newtheorem{corollary}[definition]{}

\newtheorem{lemma}[definition]{}

\newtheorem{remark}[definition]{}

\newtheorem{example}[definition]{}

\newtheorem{conjecture}[definition]{}

\renewcommand{\eqref}[1]{\hyperref[#1]{(\protect\NoHyper\ref{#1}\protect\endNoHyper)}}

\DeclareCiteCommand{\cite}
{}
{\bibhyperref{[{\protect\NoHyper\usebibmacro{cite}}\protect\endNoHyper\usebibmacro{postnote}]}}
{   
    \multicitedelim%
}
{}

\emergencystretch=1em 


\title{
On Weyl structures reducible in the direction of the Lee form
}
\date{}

\author[J.~L.~Carmona Jim\'enez]{José Luis  {\small CARMONA JIMÉNEZ}}
\address{JLCJ: 
Institute of Mathematics “Simion Stoilow” of the Romanian Academy, 21 Calea Grivitei, 010702 Bucharest, Romania}
\email{jcarmona@imar.ro}

\thanks{
\noindent JLCJ has been supported by the PNRR-III-C9-2023-I8 grant CF 149/31.07.2023 {\em Conformal Aspects of Geometry and Dynamics} and by the
projects PID2021-126124NB-I00 (AEI, Spain) and PID2024-156578NB-I00 (Spain).}

\begin{document}

\begin{abstract}
    A Weyl structure on a Riemannian manifold $(M,g)$ is a torsion-free linear connection $\nabla$ such that there is a $1$-form $\theta$ (called the Lee form) satisfying $\nabla g = 2\, \theta \otimes g$.
    We examine the case in which there exists a $\nabla$-parallel distribution of codimension $1$ on which the Lee form vanishes identically.
    We prove that if $(M,g)$ is complete with $\theta$ closed, then the Weyl structure must be flat or exact.
    We apply this to prove the conjecture in~\cite{L2023}, namely, every homogeneous Kenmotsu manifold is isometric to the real hyperbolic space.
\end{abstract}

\maketitle
{\footnotesize \noindent
\textbf{Key words.} Locally conformally product, Weyl structure, holonomy, Kenmotsu manifolds.\\
\textbf{MSC2020:} 53C05, 53C18, 53C30, 53C15, 53D15.} 

\section{Introduction}
A fundamental question in conformal geometry concerns the conditions under which a Riemannian manifold can be viewed, from a conformal perspective, as a local product of manifolds.
This question was formally introduced in~\cite{BM2011}, where the concept of a conformal product was defined in terms of Weyl structures.
A Weyl structure on a Riemannian manifold $(M,g)$ is a torsion-free linear connection $\nabla$ for which there exists a $1$-form $\theta$ (called the Lee form) satisfying $\nabla g = 2\, \theta \otimes g$.
A Weyl structure $\nabla$ is called closed (resp.~exact) if $\theta$ is closed (resp.~exact), and is reducible if there exists a non-trivial $\nabla$-parallel distribution of the tangent bundle.

In recent years, considerable progress has been made in this direction, especially for compact manifolds.
For instance,~\cite{BM2016} conjectured that every closed Weyl structure on a compact manifold is flat, exact, or has irreducible holonomy.
However,~\cite{MN2015} provided a counter-example, showing that the conjecture needed refinement.
Therefore, an additional case must be included in the classification.
Specifically, the universal cover might be isometric to the product of two manifolds, one flat and the other geodesically incomplete with respect to the lifted connection $\tilde{\nabla}$.
Finally, the refined conjecture was proved in~\cite[Thm.~1.5]{K2019}.

This last case motivated the investigations conducted in~\cite{F2024}, where the author defines a \emph{locally conformally product structure} (LCP) as a reducible, non-flat, and incomplete metric on the universal cover of a compact manifold.
In our setting, it is equivalently defined as a reducible, non-flat, incomplete, and closed Weyl structure $\nabla$ on the manifold.
This research program began only recently~\cite{F2024}.
Since then, it has rapidly developed into a highly active and fruitful area of research.
Recent works include locally conformal product structures on compact Kähler manifolds~\cite{MP2025} and on compact Einstein manifolds~\cite{MP2025*}.
Additionally, Weyl structures with special holonomy on compact conformal manifolds have been studied in~\cite{BFM2025}.

In this article, our goal is to extend the results from the compact case to the complete setting, obtaining analogous conclusions under the assumption that the holonomy of $\nabla$ is reducible.
To this end, we adopt the following framework.
Let $(M,g)$ be a complete Riemannian manifold with a closed Weyl structure $\nabla$.
We study the instances where there exists a codimension $1$ $\nabla$-parallel distribution on which the Lee form vanishes identically.
We shall refer to this situation by saying that \emph{$\nabla$ is reducible in the direction of the Lee form}.
In Section~\ref{sec:def}, we motivate the framework through explicit examples such as manifolds carrying generalized imaginary Killing spinors~(\ref{ex:3.2}) and Kenmotsu manifolds~(\ref{ex:3.3}).

Our main result is as follows.

\begin{theorem}\label{thm:1.1}
    Let $(M,g)$ be a complete Riemannian manifold, and let $\nabla$ be a closed Weyl structure reducible in the direction of the Lee form.
    Then $\nabla$ is flat or exact.
\end{theorem}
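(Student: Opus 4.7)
My plan is to pass to the universal cover, use the Lee form to realize $\nabla$ as the Levi-Civita connection of a conformally related metric, build a global warped-product description of $(\tilde M,g)$ adapted to the parallel distribution (using the completeness of $g$), and analyze the deck group action via the classical rigidity of complete Riemannian manifolds admitting non-isometric homotheties.

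Let $\tilde M$ be the universal cover, so $(\tilde M,g)$ is complete. Since $d\theta=0$ and $\tilde M$ is simply connected, $\theta=df$ for some $f:\tilde M\to\mathbb R$, and $\nabla(e^{-2f}g)=0$ identifies $\nabla$ with the Levi-Civita connection of $h:=e^{-2f}g$. With $V=\theta^{\sharp_g}$, the $g$-orthogonal complement $\mathbb RV$ of $\mathcal D$ is the $h$-orthogonal complement as well, hence is $\nabla$-parallel. Assuming $\theta$ vanishes nowhere (the case $\theta\equiv 0$ is immediate), set $N=V/|V|_g$. The $\nabla$-parallelism of $\mathbb RV$ forces $|\theta|$ to be constant on the leaves of $\mathcal D$, and a short computation yields $D_N N=0$, so integral curves of $N$ are unit-speed $g$-geodesics; completeness of $g$ then makes the flow $\phi_s$ of $N$ globally defined on $\tilde M$. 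The 1-form $\omega:=g(N,\cdot)=\theta/|\theta|$ is closed, so simple connectivity gives $\omega=dT$ with $T\circ\phi_s=T+s$; setting $L_0:=T^{-1}(0)$, the map $\Phi:\mathbb R\times L_0\to\tilde M$, $(s,p)\mapsto\phi_s(p)$, is a diffeomorphism (injective by strict monotonicity of $T$ along orbits, surjective because $T$ is onto $\mathbb R$), and a Lie derivative computation puts $g$ into the warped form $g=ds^2+e^{2(f(s)-f(0))}g_{L_0}$, or equivalently $h=(ds')^2+e^{-2f(0)}g_{L_0}$ after the substitution $ds'=e^{-f(s)}ds$.

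The deck group $\Gamma=\pi_1(M)$ acts on $\tilde M$ by $g$-isometries, and since $\theta$ descends to $M$ each $\gamma$ satisfies $\gamma^*f=f+c(\gamma)$ for a homomorphism $c:\Gamma\to\mathbb R$. If $c\equiv 0$, then $f$ descends, $\theta$ is exact on $M$, and $\nabla$ is exact. Otherwise, pick $\gamma_0$ with $c_0:=c(\gamma_0)\neq 0$. Using that $\gamma_0$ preserves $\mathcal D$, the direction of $N$, and the function $|\theta|$, one checks that in the product coordinates $\gamma_0(s,p)=(s+T_0,\sigma(p))$ with $\sigma:L_0\to L_0$ a homothety of $g_{L_0}$ of factor $e^{-c_0}\neq 1$. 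Completeness of $(\tilde M,g)$ and the closedness of the slice $\{0\}\times L_0$ imply that $(L_0,g_{L_0})$ is itself complete (any Cauchy sequence in $L_0$ is Cauchy in $(\tilde M,g)$, converges there to a point of the closed slice, and hence converges in $L_0$). The classical theorem that a complete Riemannian manifold admitting a homothety of factor $\neq 1$ must be flat (proved by iterated rescaling of the sectional curvature at the Banach fixed point of $\sigma$) then gives $g_{L_0}$ flat; hence $h$ is a flat metric and $\nabla$ is flat.

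The main technical obstacle is the construction of the global warped-product structure on $\tilde M$. One cannot invoke the de Rham splitting theorem for $h$ directly, because $h$ may be incomplete, so completeness of $g$ is used instead to guarantee that the $N$-flow is globally defined; combined with simple connectivity this produces the global coordinate $T$ and the diffeomorphism $\Phi$. Additional care is needed at points where $\theta$ vanishes and the unit vector field $N$ is undefined, in order to cover the full generality of the hypothesis.
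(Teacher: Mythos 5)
Your argument follows essentially the same route as the paper: split the universal cover as a warped product $\mathbb{R}\times_{e^{f}}L_{0}$ adapted to the parallel line field, observe that non-exactness forces some deck transformation to act on the fibre as a homothety of ratio $e^{-c_{0}}\neq 1$ (your case analysis showing that the reflection case forces $c(\gamma)=0$ is exactly the paper's dichotomy (a)/(b)), and conclude flatness from the rigidity of complete Riemannian manifolds admitting a non-isometric similarity. The two genuine differences are these. First, you build the splitting by hand from the flow of the unit normal (showing $\nabla^{g}_{N}N=0$ and using geodesic completeness of $g$ to integrate the flow), whereas the paper proves instead that the leaves of $\mathcal{D}$ are complete, being closed submanifolds of a complete manifold, and invokes the Ponge--Reckziegel splitting theorem for parallel distributions with complete leaves. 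Both uses of completeness are legitimate, and yours is more self-contained. Second, you quote the classical theorem that a complete manifold carrying a homothety of ratio $\neq 1$ is flat, while the paper reproves it (Banach fixed point, curvature rescaling, and an explicit isometry with $\mathbb{R}^{n-1}$); citing it is acceptable since it is indeed classical.

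The one point that needs repair is the one you flag yourself: $N=\theta^{\sharp}/\lvert\theta^{\sharp}\rvert$ is undefined on the zero set of $\theta$, and $\theta$ can genuinely vanish at points --- the hypothesis only requires $\theta\not\equiv 0$, and in the warped model $\theta=f'(t)\,dt$ with $f$ possibly having critical points. The fix is to work with the parallel line field $\mathcal{I}=\mathcal{D}^{\perp}$ rather than with $\theta^{\sharp}$ itself: on the simply connected cover the rank-one $\tilde\nabla$-parallel subbundle $\tilde{\mathcal{I}}$ is trivial, hence admits a global nowhere-vanishing $\tilde\nabla$-parallel section $\zeta$ of constant $h$-norm, and one sets $N=e^{-f}\zeta$ (after normalizing $\zeta$), which is a globally defined $g$-unit section of $\tilde{\mathcal{I}}$. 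All of your computations ($\nabla^{g}_{N}N=0$, closedness of $\omega=N^{\flat}$, $df\wedge\omega=0$ from $\theta^{\sharp}\in\mathcal{I}$, and the Lie-derivative computation giving the warped form) go through verbatim with this $N$, and the rest of the argument is unaffected. With that replacement the proof is complete.
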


One application of~\ref{thm:1.1} concerns Kenmotsu manifolds, a distinguished subclass of almost contact metric manifolds (see~\ref{ex:3.3} and~\ref{def:3.4}).
\cite{L2023} observed that, although the local structure of Kenmotsu manifolds is well understood (see \cite[Thm.~4]{K1972}), the existing literature does not offer significant insights into their global topology beyond the fact that they are non-compact.
We establish constraints in~\ref{cor:5.1} on the global structure of any complete Kenmotsu manifold.
This result implies that complete Kenmotsu manifolds must be globally conformal to a coKähler manifold or locally conformally flat.
In the homogeneous setting, \cite{L2023} proved that every homogeneous Kenmotsu manifold is contractible and proposed the following conjecture:

\begin{conjecture}
Every homogeneous Kenmotsu manifold is isometric to the real hyperbolic space.
\end{conjecture}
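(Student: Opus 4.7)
The plan is to combine the global structure provided by Theorem~\ref{thm:1.1}---applied through the Kenmotsu Weyl structure of Example~\ref{ex:3.3}---with a classical rigidity statement about homogeneous Riemannian manifolds admitting nontrivial homotheties.

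Let $(M,g,\eta,\xi,\varphi)$ be a homogeneous Kenmotsu manifold. Since homogeneous Riemannian manifolds are complete, Theorem~\ref{thm:1.1} applies; and by~\cite{L2023}, $M$ is contractible, hence simply connected. The Kenmotsu identity $d\eta=0$ then gives $\eta=dt$ globally for some $t\colon M\to\mathbb R$, placing us in the exact branch of Theorem~\ref{thm:1.1}. I would unwrap this into a global warped-product decomposition as follows: the relation $\nabla_X\xi=X-\eta(X)\xi$ makes $\xi$ a unit geodesic vector field whose flow is defined on all of $\mathbb R$, while $\ker\eta$ is integrable with leaves the level sets of $t$. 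Flowing a fixed leaf $N=\{t=0\}$ along $\xi$ produces a diffeomorphism $M\cong\mathbb R\times N$, and the identity $\mathcal L_\xi g=2(g-\eta\otimes\eta)$ then forces
\[
g=dt^2+e^{2t}g_N,
\]
where $(N,g_N)$ inherits a K\"ahler structure from $\varphi$. This is also the globally conformally coK\"ahler output of Corollary~\ref{cor:5.1}.

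Next, I would exploit transitivity through this decomposition. Let $G$ be the group of automorphisms of the Kenmotsu structure, acting transitively on $M$ by hypothesis. Since $G$ preserves $\xi=\partial_t$, it preserves $dt$, so every $\psi\in G$ acts as $\psi(t,x)=(t+c(\psi),\phi_\psi(x))$ for some $c(\psi)\in\mathbb R$ and $\phi_\psi\colon N\to N$. Enforcing $\psi^*g=g$ on horizontal vectors gives $\phi_\psi^*g_N=e^{-2c(\psi)}g_N$, so $\phi_\psi$ is a homothety of $(N,g_N)$. Transitivity of $G$ along the $\mathbb R$-factor makes $c(\psi)$ realise every real value, giving homotheties of $(N,g_N)$ of every positive ratio; meanwhile the subgroup $\{c(\psi)=0\}$ consists of isometries of $(N,g_N)$ and acts transitively on each leaf, so $(N,g_N)$ itself is a connected homogeneous Riemannian manifold.

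To conclude, I would invoke the classical rigidity that a connected homogeneous Riemannian manifold admitting a nontrivial homothety must be flat: the scalar $\|\mathrm{Rm}\|^2$ is constant on such a manifold and transforms as $\|\mathrm{Rm}\|^2_{cg_N}=c^{-2}\|\mathrm{Rm}\|^2_{g_N}$, so its invariance under a homothety with $c\neq 1$ forces $\|\mathrm{Rm}\|^2=0$, hence $\mathrm{Rm}=0$. Being simply connected (as a factor of the contractible $M$), complete, flat, and K\"ahler, $N$ is then isometric to $\mathbb C^n$, and the substitution $s=e^{-t}$ identifies $\bigl(\mathbb R\times\mathbb C^n,\,dt^2+e^{2t}|dx|^2\bigr)$ with the upper half-space model of real hyperbolic space $H^{2n+1}$. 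The most delicate step in this plan is the global warped-product decomposition from the exact case, which relies on both simple connectedness (to define $t$ and $N$ globally) and completeness (to flow $\xi$ over all of $\mathbb R$); once this is in place, the rigidity step is essentially formal.
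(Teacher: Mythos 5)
Your argument is correct, and while it shares the overall skeleton of the paper's proof (simple connectedness via~\cite{L2023}, a global warped-product splitting, and the observation that transitivity in the $\mathbb{R}$-direction produces strict homotheties of the fibre), the two decisive steps are carried out by genuinely different means. First, you obtain the splitting $g=dt^2+e^{2t}g_N$ directly from the flow of the geodesic unit field $\xi$ and the identity $\mathcal L_\xi g=2(g-\eta\otimes\eta)$; because $\|\eta\|\equiv 1$ the warping function comes out linear for free. The paper instead invokes the splitting theorem of~\cite{PG1993} (via~\ref{lem:4.1}) to get $dt^2+e^{2f(t)}h_N$ with an a priori unknown $f$, and must then run a Ricci/scalar curvature computation in the proof of~\ref{thm:5.5} to force $f$ to be linear --- a step your route renders unnecessary in the Kenmotsu case, though it is what allows the paper to treat general closed Weyl structures reducible in the direction of the Lee form where $\|\theta\|$ is not constant a priori. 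Second, to show the fibre is flat you use that $N$ is itself homogeneous (so $\|\mathrm{Rm}\|^2$ is constant) and that this constant scales nontrivially under a strict homothety, hence vanishes; the paper instead proves the stronger~\ref{lem:4.7}, which needs no homogeneity of $N$ and relies on the Banach fixed-point theorem plus an exponential-map blow-up argument. Your version is more elementary but less general; the paper's version is reused elsewhere (e.g.\ at the end of Section~\ref{sec:main}). The one step you should write out with care is the global diffeomorphism $M\cong\mathbb R\times N$: you need that $t\circ\mathrm{Fl}^\xi_s$ increases at unit speed (so the flow map is injective), that the leaf $N$ is a closed connected component of $\{t=0\}$, and that the image of the flow map is open and closed, hence all of $M$; you flagged this as the delicate point and the outline you give does close it.
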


Motivated by this conjecture, we study Riemannian homogeneous manifolds whose codimension $1$ distribution (on which the Lee form vanishes) is invariant.
As a consequence,~\ref{thm:5.5} shows that these homogeneous manifolds are homothetic to the real hyperbolic space.
In particular, every homogeneous Kenmotsu manifold is necessarily isometric to the real hyperbolic space; hence, the conjecture in~\cite{L2023} holds.

\subsection*{Structure of the article}
Section~\ref{sec:prelim} provides the necessary background on conformal geometry, Weyl structures, Lee forms, and notation that will be used throughout the text.
Section~\ref{sec:def} introduces the key notion of a Weyl structure reducible in the direction of the Lee form~(\ref{def:3.1}) and illustrates it with two guiding families of examples: manifolds carrying generalized imaginary Killing spinors and Kenmotsu manifolds.
Section~\ref{sec:main} develops the heart of the argument.
After establishing a warped-product splitting of the universal cover~(\ref{lem:4.1}), it proves~\ref{thm:1.1}: any closed Weyl structure on a complete manifold satisfying the reducibility condition must be flat or exact.
Finally, Section~\ref{sec:apps} classifies complete Kenmotsu manifolds as globally conformal to a coKähler manifold or locally conformally flat~(\ref{cor:5.1}), and homogeneous Kenmotsu manifolds as isometric to the real hyperbolic space~(\ref{cor:5.7}).

\subsection*{Acknowledgments} I would like to thank Andrei Moroianu for his valuable comments and observations, and Sergiu Moroianu for fruitful discussions. I would also like to thank the anonymous referee for his/her
attentive reading of the manuscript and helpful remarks.


\section{Preliminaries}\label{sec:prelim}

Let $(M,g)$ be a Riemannian manifold of dimension $n \ge 2$.
We denote its Levi–Civita connection by $\nabla^g$, and we denote the musical isomorphisms determined by the metric $g$ by
\[
(\cdot)^\sharp \colon T^*M \longrightarrow TM,
\qquad
(\cdot)^\flat \colon TM \longrightarrow T^*M.
\]
Recall that these maps are parallel with respect to $\nabla^g$ and satisfy $
(\cdot)^\sharp\circ(\cdot)^\flat = \mathrm{Id}_{TM}$ and $(\cdot)^\flat\circ(\cdot)^\sharp = \mathrm{Id}_{T^*M}$.

A \emph{conformal structure} on a manifold $M$ is an equivalence class of metrics $[g]$ where two metrics $g$ and $g'$ are equivalent if there exists a smooth function $f \in \mathcal{C}^{\infty}(M)$ such that $g' = e^{2f} g$.
A \emph{Weyl structure} on $(M, [g])$ is a torsion-free linear connection $\nabla$ such that for all $h \in [g]$, there exists a $1$-form $\theta_h$ on $M$ satisfying $\nabla h = 2\, \theta_h \otimes h$.
The $1$-form $\theta_h$ is called the \emph{Lee form} of $\nabla$ with respect to $h$.
Therefore, the Weyl structure preserves the conformal class.
If $f \in \mathcal{C}^{\infty}(M)$, then $\theta_{e^{2f}h} = \theta_h + df$.
Furthermore, if we fix a metric $h \in [g]$, the Weyl structure $\nabla$ is uniquely determined by $\theta_h$ via
\begin{align}\label{eq:2.1}
    \nabla _X Y = \nabla^{h} _X Y - \theta_h(Y) \,X + h(X,Y)\,\theta_h ^{\sharp} - \theta_h (X) \,Y,
\end{align}
for all $X$, $Y\in \mathfrak{X}(M)$,
where $\nabla^h$ is the Levi-Civita connection on $(M,h)$.
A Weyl structure is called \emph{closed} (resp.~\emph{exact}) if its Lee form is closed (resp.~exact).
A Weyl structure $\nabla$ is said to be \emph{flat} if its curvature vanishes.

A Weyl structure $\nabla$ on $M$ is said to be \emph{reducible} (or to have \emph{reducible holonomy}) if there is a non-trivial $\nabla$-parallel distribution $\mathcal{D}$.
Let $\mathcal{I} = \mathcal{D}^\perp$ be the orthogonal distribution of $\mathcal{D}$.
Notice that, because $\nabla g = 2\,\theta \otimes g$, we have
\[
2\theta(X)\, g(\mathcal{I}, \mathcal{D}) = X (g(\mathcal{I}, \mathcal{D})) - g(\nabla_X \mathcal{I},\mathcal{D}) - g(\mathcal{I},\nabla_X\mathcal{D})
\]
for all $X\in \mathfrak{X}(M)$.
Thus, using $g(\mathcal{I}, \mathcal{D})=0$ and $\nabla _X \mathcal{D} \subset \mathcal{D}$, we deduce that $\mathcal{I}$ is $\nabla$-parallel.
Furthermore, for any torsion-free connection, every parallel distribution is involutive.
Thus, $\mathcal{D}$ and $\mathcal{I}$ are integrable.

A \emph{locally conformally product (LCP) structure} on a compact manifold $M$ is a conformal structure $c$ paired with a closed, non-exact, and non-flat Weyl structure $\nabla$ with reducible holonomy.
Such structures are equivalently characterized by a reducible Riemannian metric $h$ on the universal cover $\tilde{M}$ of $M$, which depends on $\nabla$.
They are uniquely defined up to a constant factor, so that the fundamental group $\pi_1(M)$ acts by similarities; see \cite[Prop.~2.4]{F2024}.

We briefly present this construction without the compactness assumption.
Let $(M,g)$ be a Riemannian manifold equipped with a closed Weyl structure $\nabla$ such that $\nabla g = 2\, \theta \otimes g$, that is, $\theta$ is the Lee form of $\nabla$ with respect to $g$.
Let $(\tilde{M}, \tilde{g})$ be the universal cover of $(M,g)$.
We denote the lifts of $\theta$ and $\nabla$ by $\tilde{\theta}$ and $\tilde{\nabla}$, respectively.
Since $\theta$ is closed and $\tilde{M}$ is simply connected, it follows that $\tilde\theta$ is exact, i.e.~there exists $f\in \mathcal{C}^{\infty}(\tilde{M})$ satisfying $\tilde{\theta} = df$.
Then $(\tilde{M}, h \coloneqq e^{-2f} \tilde{g})$ is a Riemannian manifold.
Since $\nabla g = 2\, \theta \otimes g$, we obtain  $\tilde \nabla (e^{-2f}\tilde g) = e^{-2f} ( -2 \,\tilde \theta \otimes \tilde g + 2\,\tilde \theta \otimes \tilde g) = 0$, i.e., $\tilde \nabla$ is the Levi-Civita connection of $(\tilde{M}, h \coloneqq e^{-2f} \tilde{g})$.
We say that $\phi \colon \tilde{M} \longrightarrow \tilde{M}$ is a similarity of $(\tilde{M},h)$ of ratio $\lambda \in (0,\infty)$ if it is a diffeomorphism satisfying $\phi^* h = \lambda^2 h$.
Thus, we denote the group of similarities by 
\[
\mathrm{Sim}(\tilde{M}, h) = \{ \phi \in \mathrm{Diff}(\tilde{M}) :\: \exists \lambda_\phi \in (0,\infty)  ,\, \phi^* h = \lambda_\phi ^2\, h \}.
\] 
Finally, in this case, $\mathrm{Sim}(\tilde{M}, h)$ coincides with the Lie group $\mathrm{Aff}(\tilde{M}, \tilde{\nabla})$ of affine transformations of $\tilde{\nabla}$.
Let $\gamma \in \pi _1 (M)$ be a closed curve in $M$.
The deck transformation of $\gamma$ is a diffeomorphism $T_{\gamma}\colon \tilde{M} \longrightarrow \tilde{M}$ that satisfies
\begin{equation*}
    (T_{\gamma})^* \tilde{g} = \tilde{g}, \qquad (T_{\gamma})_* \left(\tilde{\nabla} _X Y\right) = \tilde{\nabla} _{(T_{\gamma})_*X} (T_{\gamma})_*Y
\end{equation*}
for all $X$, $Y\in\mathfrak{X}(\tilde{M})$.
Thus, every deck transformation is a similarity of $(\tilde{M}, h)$ and becomes an isometry precisely when the Lee form $\theta$ is exact.

\section{Definition and examples}\label{sec:def}

\begin{definition}\label{def:3.1}
    Let $(M,g)$ be a Riemannian manifold and $\nabla$ a Weyl structure with non-zero Lee form $\theta$.
    We say \emph{$\nabla$ is reducible in the direction of the Lee form} if one of the following two occurs:
    \begin{enumerate}
        \item there exists a $\nabla$-parallel distribution $\mathcal{D}$ of codimension $1$ such that $\theta (X)= 0$, for all $X\in \mathcal{D}$.
        \item there exists a $\nabla$-parallel distribution $\mathcal{I}$ of dimension $1$ such that $\theta^\sharp\in \mathcal{I}$.
    \end{enumerate}
\end{definition}

Any distribution that is $\nabla$-parallel has a $\nabla$-parallel orthogonal complement (see the third paragraph of Section~\ref{sec:prelim}).
Thus, conditions (1) and (2) in~\ref{def:3.1} are equivalent.
To motivate~\ref{def:3.1} we present two examples.

\begin{example}\label{ex:3.2}
    Let $(M,g)$ be a complete connected Riemannian spin manifold equipped with a spinor $\psi \in \Gamma (\Sigma M)$.
    We say that $\psi$ is~\emph{a generalized imaginary Killing spinor} if there exists a non-zero function $\mu \in \mathcal{C}^\infty (M)$ such that
    \[
    \nabla ^g _X \psi = i \mu \, X \cdot \psi
    \]
    for all $X \in \mathfrak{X}(M)$, where $\nabla^g$ denotes the spinor derivative with respect to the Levi-Civita connection and Clifford multiplication is represented by `` $\cdot$ ".
    Let $\eta (X) := i \langle \psi , X\cdot \psi \rangle$, $\xi  := \eta ^\sharp$, and let $\mathrm{dist}$ denote the distance on the spinor bundle induced by the metric on spinors.
    T.~Friedrich observed in~\cite[Prop.~5]{F1990} that the function $q_\psi = \langle \psi , \psi \rangle^2 - \| \xi \|^2 $ is constant and non-negative.
    Moreover, he showed that $q_\psi$ equals $\langle \psi , \psi \rangle \,\mathrm{dist} (i\psi, \mathcal{V}_\psi)$, where $\mathcal{V}_\psi = \{X\cdot \psi : X \in \mathfrak{X}(M)\}$.
    Thus, we say that $\psi$ is of \emph{type~$1$} if $q_\psi=0$, or, equivalently, there exists a unit vector field $\xi \in \mathfrak{X}(M)$ such that $\xi \cdot \psi = i \psi$.
    Otherwise, if $q_\psi >0$, $\psi$ is of \emph{type~$2$}.

    Generalized imaginary Killing spinors were first studied in~\cite{B1989} and~\cite{B1989*} for constant $\mu$, and later, for non-constant $\mu$ in~\cite{R1991}.
    They have interesting geometric properties.
    For example, if $\mu$ is constant, then $(M,g)$ is an Einstein manifold with negative scalar curvature (see~\cite[p.~31, Thm.~8]{BFGK1991}).

    We claim that complete manifolds with generalized imaginary Killing spinors admit a Weyl structure reducible in the direction of the Lee form.
    In~\cite[Cor.~9]{F1990}, it was shown that if $q_\psi>0$, then $\mu$ is constant.
    Hence, according to~\cite[Thm.~2]{B1989*}, $\psi$ would be an imaginary Killing spinor of type~2.
    Then $(M,g)$ is isometric to the real hyperbolic space, see~\cite[Thm.~1]{B1989*}.
    However, the real hyperbolic space already has imaginary Killing spinors of type~$1$.
    Thus, we prove here the existence of a Weyl structure reducible in the direction of the Lee form only for generalized imaginary Killing spinors of type~$1$.
    
    Let us consider the $1$-form $\theta \coloneqq - 2\mu \, \xi ^\flat$ and the Weyl structure $\nabla$ on $(M,g)$ whose Lee form is $\theta$:
    \begin{equation}\label{eq:3.1}
    \nabla _X Y = \nabla^{g} _X Y -  \theta  (Y) \,X + g(X,Y)\,\theta^\sharp - \theta(X)\, Y.
    \end{equation}
    We differentiate the identity $i \psi = \xi \cdot \psi$ with respect to $\nabla^g _X$:
    \begin{align*}
	-\mu  X \cdot \psi
            &\;=\; \nabla^g _X( \xi \cdot \psi) \;=\; (\nabla^g _X \xi) \cdot \psi +\xi \cdot \nabla^g _X \psi \\
		&\;=\;  (\nabla^g _X \xi) \cdot \psi + \mu i\, \xi \cdot X \cdot  \psi\\
		&\;=\; (\nabla^g _X \xi) \cdot \psi -  \mu i\,X \cdot \xi \cdot \psi - 2 \mu i\, g(X,\xi)\,\psi	\\
		&\;=\; (\nabla^g _X \xi) \cdot \psi + \mu\, X \cdot \psi - 2\mu \, g(X,\xi)\, \xi \cdot \psi.
	\end{align*}
    Since $\psi\neq 0$, it follows that
    \begin{equation}\label{eq:3.2}
        \nabla^g _X \xi = -2\mu\, \big(X - g(X,\xi)\, \xi\big).
    \end{equation}
    Finally we compare~\eqref{eq:3.1} and~\eqref{eq:3.2} and obtain
    \begin{equation}\label{eq:3.3}
    \nabla _X \xi \;=\; -2\mu\, \big(X - g(X,\xi)\, \xi\big) + 2\mu\, X \;=\; 2\mu \, g(X,\xi)\, \xi.
    \end{equation}
    Let us now take $\mathcal{I} \coloneqq \mathcal{C}^\infty (M) \, \cdot \, \xi$, the distribution generated by the unit vector field $\xi$.
    Therefore~\eqref{eq:3.3} shows that $\mathcal{I}$ is $\nabla$-parallel, as claimed.
\end{example}

\begin{example}\label{ex:3.3}
Let us consider a quintuple $(M,g,\varphi,\xi,\eta)$ consisting of the following data:
an odd-dimensional Riemannian manifold $(M,g)$, a $(1,1)$–tensor field $\varphi$, a unit vector field $\xi$ (called the \emph{Reeb vector field}), and a $1$-form $\eta$ satisfying $\eta(\xi)=1$.
The quintuple $(M,g,\varphi,\xi,\eta)$ is called an \emph{almost contact metric manifold} if
\begin{equation*}
\varphi^{2}X = -X + \eta(X)\,\xi, \qquad
g\bigl(\varphi X,\varphi Y\bigr) = g(X,Y) - \eta(X)\,\eta(Y),
\end{equation*}
for all vector fields $X,Y$ on $M$.
In particular, these conditions imply $\varphi \xi = 0$ and $\eta \circ \varphi = 0$.
See~\cite{B2010} for further exposition of this topic.

\begin{definition}\label{def:3.4}
    Let $\alpha \in \mathcal{C}^{\infty}(M)$ be a non-zero function.
    An almost contact metric manifold $(M,g,\varphi,\xi,\eta)$ is called an \emph{$\alpha$-Kenmotsu manifold} if, for any vector fields $X$, $Y \in \mathfrak{X}(M)$, the following condition holds:
    \begin{equation}\label{eq:3.4}
        (\nabla^g _X \varphi )Y = -\alpha\big( g(  X,\varphi Y)\,\xi+\eta(Y)\,\varphi X\big).
    \end{equation}
    An $\alpha$-Kenmotsu manifold $(M,g,\varphi,\xi,\eta)$ is called \emph{Kenmotsu} if $\alpha = 1$.
    The Kenmotsu class of almost contact metric manifolds was first introduced in~\cite{K1972}.
\end{definition}

\begin{remark}
    According to the Chinea–González classification (\cite{CG1990}) of almost-contact metric structures, the $\alpha$-Kenmotsu class is described by the submodule $\mathcal{C}_5$.
    Setting $\alpha \equiv 0$ reduces the defining condition of an $\alpha$-Kenmotsu structure to that of a coKähler manifold.
    Namely, $\nabla^g g= 0$, $\nabla^g \varphi = 0$, and $\nabla^g \xi =0$.
    Therefore, requiring $\alpha$ to be non-zero is essential for obtaining an almost contact metric manifold in the strict class $\mathcal{C}_5$.
\end{remark}

We now construct a Weyl structure reducible in the direction of the Lee form for these manifolds.
Taking $Y=\xi$ in~\eqref{eq:3.4} gives $(\nabla^g _X \varphi) \xi = -\alpha \, \varphi X$; by definition we also have $(\nabla^g _X \varphi) \xi = - \varphi \big(\nabla^g _X  \xi\big)$.
Combining these identities with  $\varphi^{2}X = -X + \eta(X)\,\xi$ we obtain:
\begin{equation*}
\nabla^g _X \xi = \alpha\big(X-\eta(X)\,\xi\big) + \eta \big(\nabla^g _X \xi\big)\, \xi.
\end{equation*}
Finally, since $\xi$ is a unit vector field, we have $2\,\eta \big(\nabla^g _X \xi\big) = 2\,g(\nabla^g _X \xi, \xi) = X(g (\xi,\xi))  =0$.
This yields:
\[
\nabla^g _X \xi = \alpha\big(X-\eta(X)\,\xi\big).
\]

We set $\theta \coloneqq \alpha \eta$ and let $\nabla$ denote the Weyl structure whose Lee form is $\theta$ (see~\eqref{eq:2.1}).
Then we have
\begin{equation}\label{eq:3.5}
    \nabla _X \xi \;=\; - \alpha \, \eta(X)\, \xi \;=\; - \theta (X)\,\xi.
\end{equation}
This implies that $\nabla$ is reducible in the direction of the Lee form.
\end{example}
\section{Main Result}\label{sec:main}

The purpose of this section is to prove~\ref{thm:1.1}.
To do so, we begin with the following lemma, which involves the geometry of the universal cover.

\begin{lemma}\label{lem:4.1}
Let $(M,g)$ be a complete Riemannian manifold, and let $\nabla$ be a closed Weyl structure whose Lee form is $\theta$.
Suppose that $\nabla$ is reducible in the direction of the Lee form.
Then the universal cover $(\tilde{M},\tilde{g})$ is isometric to a warped product of the form $(\mathbb{R} \times N, dt^2 + e^{2f(t)}h_N )$ for some complete Riemannian manifold $(N,h_N)$, where $\tilde{\theta} = df$.
\end{lemma}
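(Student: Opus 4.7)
The plan is to lift to the universal cover $(\tilde M, \tilde g)$ and realize the desired isometry by flowing the $\tilde g$-unit vector field $\xi$ spanning the parallel line distribution $\tilde{\mathcal{I}}$. Since $\tilde\theta$ is closed and $\tilde M$ is simply connected I write $\tilde\theta = df$, and since $\tilde{\mathcal{I}}$ is a trivial line bundle over $\tilde M$ I pick a global unit section $\xi$, oriented so that $\sigma := \tilde\theta(\xi) = \xi(f)$ is positive.

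First I would convert the $\tilde\nabla$-parallelism of $\tilde{\mathcal{I}}$ into Levi-Civita information on $\xi$. Parallelism of $\tilde{\mathcal{I}}$ combined with $\tilde g(\xi,\xi) = 1$ and $\tilde\nabla\tilde g = 2\tilde\theta\otimes\tilde g$ forces $\tilde\nabla_X\xi = -\tilde\theta(X)\xi$, and inserting this into~\eqref{eq:2.1} produces
\[
\nabla^{\tilde g}_X \xi \;=\; \sigma\bigl(X - \eta(X)\,\xi\bigr), \qquad \eta := \xi^\flat.
\]
From this identity I read off: (i) $\xi$ is $\tilde g$-geodesic, so its flow $\Phi_t$ is defined on $\mathbb{R}\times\tilde M$ by completeness of $\tilde g$; (ii) $\mathcal{L}_\xi\tilde g = 2\sigma(\tilde g - \eta\otimes\eta)$; (iii) $\nabla^{\tilde g}\eta$ is symmetric, so $d\eta = 0$, whence $d\tilde\theta = d\sigma\wedge\eta = 0$ forces $\sigma$ to be constant along each leaf of $\tilde{\mathcal{D}}$. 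A short bracket computation $[\xi,X] = \nabla^{\tilde g}_\xi X - \sigma X \in \tilde{\mathcal{D}}$ for $X\in\tilde{\mathcal{D}}$ then shows that $\Phi_t$ permutes the leaves.

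Fixing a leaf $N_0$ through a basepoint $p_0$, I define $\Psi\colon \mathbb{R}\times N_0 \to \tilde M$ by $\Psi(t,p) = \Phi_t(p)$. Leafwise constancy of $\sigma$ makes its value on $\Phi_t(N_0)$ depend only on $t$, so integrating $\tfrac{d}{dt}\Phi_t^*\tilde g = \Phi_t^*\mathcal{L}_\xi\tilde g$ along the flow yields
\[
\Psi^*\tilde g \;=\; dt^2 + e^{2F(t)}\, h_{N_0}, \qquad F(t) := \int_0^t \sigma(s)\,ds,
\]
where $h_{N_0}$ is the induced metric on $N_0$. Setting $f(t) := F(t) + f(p_0)$ and absorbing $e^{2f(p_0)}$ into the metric on $N_0$ recovers the form in the statement.

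The main obstacle I anticipate is upgrading $\Psi$ from a local diffeomorphism to a global isometry. I would use the standing assumption that $\theta$ is nowhere vanishing (as in the motivating examples~\ref{ex:3.2} and~\ref{ex:3.3}): then $df$ is nowhere zero, so the level sets of $f$ are embedded hypersurfaces and $N_0$, being a connected component of a level set, is a closed embedded submanifold of $\tilde M$, hence complete in the induced metric. A warped product over $\mathbb{R}$ with complete fiber is complete, so $\Psi$ is a local isometry from a complete Riemannian manifold to the connected manifold $\tilde M$, and is therefore a Riemannian covering. Since $\tilde M$ is simply connected and $\mathbb{R}\times N_0$ is connected, $\Psi$ is forced to be a global isometry, completing the proof.
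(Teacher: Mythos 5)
Your argument is correct in its essentials, but it follows a genuinely different route from the paper. The paper passes to the auxiliary metric $h=e^{-2f}\tilde g$, for which $\tilde\nabla$ is the Levi-Civita connection, observes that $\tilde{\mathcal I}$ and $\tilde{\mathcal D}$ are $h$-parallel, proves completeness of the leaves of $\tilde{\mathcal D}$ (as connected components of level sets of $f$), and then invokes the de Rham–type splitting theorem of Ponge--Reckziegel to get a global product for $h$, which it finally rewrites as a warped product for $\tilde g$. You instead work directly with $\tilde g$: you extract the unit normal $\xi$, show it is geodesic with $\nabla^{\tilde g}_X\xi=\sigma(X-\eta(X)\xi)$, and build the isometry explicitly by integrating $\mathcal L_\xi\tilde g=2\sigma(\tilde g-\eta\otimes\eta)$ along the (complete, because geodesic) flow, closing with the standard covering argument for a local isometry out of a complete manifold. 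Your computations check out (in particular $\eta(\nabla^{\tilde g}_\xi X)=0$ so the flow does permute leaves, and the cross terms $\tilde g(\xi,d\Phi_t v)$ vanish), and what your approach buys is self-containedness — no external splitting theorem and no auxiliary metric — at the price of having to verify completeness of the warped product and the covering step by hand. The one point to repair is your appeal to a ``standing assumption that $\theta$ is nowhere vanishing'': \ref{def:3.1} only requires a non-zero Lee form, so you should not use regularity of the level sets of $f$ to conclude that $N_0$ is closed. This is easily fixed with material you already have: your step (iii) shows $\eta=\xi^\flat$ is closed, and it is nowhere vanishing by construction, so $\eta=dg$ on the simply connected $\tilde M$ with $dg$ nowhere zero; the leaves of $\tilde{\mathcal D}=\ker dg$ are then connected components of regular level sets of $g$, hence closed embedded hypersurfaces, hence complete. (The positivity normalization $\sigma>0$ should likewise be dropped, since it is only available when $\sigma$ has no zeros and is not needed anywhere in the flow computation.) With that substitution your proof is complete and matches the statement as given.
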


\begin{proof}
Let $(\tilde{M}, \tilde{g})$ be the universal cover of $(M,g)$.
By hypothesis, there exists a $\nabla$-parallel distribution $\mathcal{I}$ of dimension $1$ such that $\theta^\sharp \in \mathcal{I}$.
We denote by $\tilde{\nabla}$, $\tilde{\theta}$, and $\tilde{\mathcal{I}}$ the lifts to the universal cover $(\tilde{M},\tilde{g})$ of $\nabla$, $\theta$, and $\mathcal{I}$, respectively.

Since $\theta$ is closed, it follows that $\tilde{\theta}$ is also closed.
Moreover, because $\tilde{M}$ is simply connected, $\tilde{\theta}$ must be exact.
Thus, there exists $f\in \mathcal{C}^\infty(\tilde{M})$ such that $\tilde{\theta}=df$.
Therefore, $(\tilde{M}, h := e^{-2f}\tilde{g})$ is a Riemannian manifold whose Levi-Civita connection is $\tilde{\nabla}$.

Because $\mathcal{I}$ is $\nabla$-parallel, it follows that $\tilde{\mathcal{I}}$ is $\tilde{\nabla}$-parallel.
Since $\tilde{M}$ is simply connected, the rank-$1$ $\tilde{\nabla}$-parallel subbundle defined by $\tilde{\mathcal{I}}$ is therefore trivial.
Hence, there exists a global non-vanishing $\tilde{\nabla}$-parallel section; we denote it by $\zeta \in \mathfrak{X}(\tilde{M})$ and $\beta \coloneqq \zeta ^{\flat_{h} }\in \Omega^1 (\tilde{M})$, where $(\cdot)^{\flat_{h}}$ denotes the musical isomorphism induced by $h$ on $\tilde{M}$.
Thus, $\| \beta \| _{h}$ is constant and, in particular, $\beta$ never vanishes.
From now on, without loss of generality, we assume that $\| \beta\|_{h} = 1$; otherwise, we normalize $\beta$.

Consider the distribution $\mathcal{D} = \{X \in \mathfrak{X}(M) :\: g(X,Y) = 0,\, \forall Y\in \mathcal{I}\}$.
Recall that $\mathcal{I}$ and $\mathcal{D}$ are two integrable, $\nabla$-parallel, and complementary distributions (see Section~\ref{sec:prelim}).
Let us now consider $\tilde{\mathcal{D}}$ and $\tilde{\mathcal{I}}$, the lifts of $\mathcal{D}$ and $\mathcal{I}$, respectively.
It is straightforward that $\tilde{\mathcal{I}} = \mathcal{C}^{\infty}(\tilde{M}) \,\zeta$.
Furthermore, since $\tilde{\nabla} h = 0$ and $\tilde{\nabla} \beta = 0$, we have $\tilde{\nabla} \zeta = 0$.
Thus, $\tilde{\mathcal{I}}$ is $\tilde{\nabla}$-parallel and integrable.

Fix a point $p \in M$ and choose a lift $\tilde{p} \in \tilde{M}$.
We claim that the maximal integral leaf of $\mathcal{D}$ through $p$ is $\nabla$-geodesically complete.
To prove this, it suffices to show that the connected maximal integral leaf $N$ of $\tilde{\mathcal{D}}$ through $\tilde{p}$ is $\tilde{\nabla}$-geodesically complete.

Since $df \wedge \beta = 0$, contracting with any $X\in \tilde{\mathcal{D}}$ yields $df (X) = 0$.
We set $C\coloneqq f(\tilde{p})$.
Since $f$ is continuous, the set $f^{-1} (C)$ is closed and contains $N$.
Furthermore, by the maximality condition, $N$ is the connected component of $f^{-1} (C)$ containing $\tilde{p}$.
Therefore, $N$ is a closed submanifold.

Because $(\tilde{M}, \tilde{g})$ is complete and $N$ is closed and without boundary, we conclude that $(N, \tilde{g}|_{N})$ is complete (see~\cite[Cor.~2.10]{C1992}).
It is globally homothetic to $(N, e^{-2C}\tilde{g}|_{N})$, and therefore it is also complete.
Indeed, the argument is independent of the choice of $p$.
Thus, all the integral leaves of $\tilde{\mathcal{D}}$ are $\tilde{\nabla}$-geodesically complete.

In~\cite{PG1993}, the authors study the conditions under which a simply connected Riemannian manifold with reducible holonomy can be a twisted product of two Riemannian manifolds.
In particular, the following theorem is a consequence of~\cite{PG1993}.

\begin{theorem}[\cite{PG1993}]\label{thm:PG1993}
    Let $(\tilde{M},h)$ be a simply connected Riemannian manifold such that the tangent bundle $T\tilde{M}$ admits two complementary, integrable, and parallel distributions $\tilde{\mathcal{I}}$ and $\tilde{\mathcal{D}}$.
    Suppose that the leaves of $\tilde{\mathcal{D}}$ are complete.
    Then $\tilde{M}$ is globally isometric to a product of two Riemannian manifolds $L \times N$, where $L$ and $N$ are the maximal integral leaves of $\tilde{\mathcal{I}}$ and $\tilde{\mathcal{D}}$ through any $\tilde{p}\in \tilde{M}$, respectively.
\end{theorem}

Since $(\tilde{M}, h = e^{-2f} \tilde{g})$, $\tilde{\mathcal{I}}$ and $\tilde{\mathcal{D}}$ satisfy the conditions of~\ref{thm:PG1993}, it follows that $(\tilde{M}, h)$ is globally isometric to the product of two Riemannian manifolds $(L, \beta \otimes \beta) \times (N, h_N \coloneqq h|_N)$.
Here, $L$ is a connected integral leaf of $\tilde{\mathcal{I}}$, which is one-dimensional.
Moreover, because $(\tilde{M} , \tilde{g})$ is complete, it follows that $(\tilde{M},\tilde{g})$ is isometric to $(\mathbb{R} \times N, dt^2 + e^{2f(t)} h_N)$, where $dt = e^{f} \beta$.
\end{proof}

We now explore when the deck transformations are isometries for $h$.

Let $\gamma \in \pi _1 (M)$ be a closed curve in $M$.
Any deck transformation $T_{\gamma}\colon \tilde{M} \longrightarrow \tilde{M}$ is an isometry of $\tilde{g}$ and preserves $\tilde{\theta}$, $\tilde{\mathcal{I}}$ and $\tilde{\mathcal{D}}$,  
\begin{equation}\label{eq:4.1}
    (T_{\gamma})^* \tilde{g} = \tilde{g}, \quad (T_{\gamma})^* \tilde{\theta} = \tilde{\theta}, \quad (T_{\gamma})_* \tilde{\mathcal{I}} = \tilde{\mathcal{I}}, \quad (T_{\gamma})_* \tilde{\mathcal{D}} = \tilde{\mathcal{D}}.
\end{equation} 
From the last two identities in~\eqref{eq:4.1}, since $(\tilde{M}, \tilde{g})$ is a warped product $(\mathbb{R} \times N, dt^2 + e^{2f(t)} h_N)$, there exist two functions $\tau_\gamma \colon \mathbb{R} \longrightarrow \mathbb{R}$ and $\varphi_\gamma \colon N \longrightarrow N$, such that
\[
T_{\gamma} (r, \tilde{q}) = (\tau_\gamma (r), \varphi_{\gamma} (\tilde{q})), \quad \text{ for all } (r,\tilde{q}) \in \mathbb{R} \times N.
\]
Since $ (T_{\gamma})^* \tilde{\mathcal{I}} = \tilde{\mathcal{I}}$ and $(T_{\gamma})^* \tilde{g} = \tilde{g}$, it follows that $\tau_\gamma$ is an isometry of $(\mathbb{R}, dt^2)$.

Since $\tilde{\theta} \wedge dt = 0$, there exists $\alpha \in \mathcal{C}^\infty(\tilde{M})$ such that $\tilde{\theta} = \alpha\, dt$.
Because $\tilde{\theta}$ is closed, $\alpha$ is constant along the leaves of $\tilde{\mathcal{D}}$.
We use that $\tau_\gamma$ is an isometry of $(\mathbb{R}, dt^2)$ to obtain that there are exactly two possibilities:
\begin{itemize}
    \item[(a)] $\tau_\gamma ^* dt = -dt$, which implies $\tau_\gamma ^* \alpha = -\alpha$,
    \item[(b)] $\tau_\gamma ^* dt = dt$, which implies $\tau_\gamma ^* \alpha = \alpha$.
\end{itemize}
Case (a).~Then $\tau_\gamma(r) = - r + C_\gamma$, where $C_\gamma \in \mathbb{R}$ is constant.
Using $(T_{\gamma})^* \tilde{g} = \tilde{g}$, we obtain
\[
 dt^2 + e^{2f(t)} h_N \;=\; \tilde{g}  \;=\; (T_{\gamma})^* \tilde{g} \;=\; dt^2 + e^{2f(-t + C_\gamma)}  (\varphi_\gamma)^* h_N.
\]
Thus, 
\[
(\varphi_\gamma)^* h_N = e^{2(f(t) - f(-t + C_\gamma))} h_N .
\]
Now we apply the Fundamental Theorem of Calculus and $df = \alpha \, dt$ to obtain:
\[
 (\varphi_\gamma)^* h_N  = e^{2 \int _{-t + C_\gamma}  ^t \alpha (s) ds} h_N.
\]
Finally, we use the identity $\tau_\gamma^* \alpha = -\alpha$.
Thus, for all $t \in \mathbb{R}$, we have $\alpha (-t + C_\gamma) = - \alpha(t)$; hence $\int _{-t + C_\gamma}  ^t \alpha (s) ds = 0$ and
\begin{equation*}
    (\varphi_\gamma)^* h_N  = h_N.
\end{equation*}
Therefore, in the case (a), $ (T_\gamma)^* h = h$.

Case (b).~Then $\tau_\gamma(r) = r + C_\gamma$, where $C_\gamma$ is a constant.
Using again $(T_{\gamma})^* \tilde{g} = \tilde{g}$, we obtain
\[
 dt^2 + e^{2f(t)} h_N \;=\; \tilde{g}  \;=\; (T_{\gamma})^* \tilde{g} \;=\; dt^2 + e^{2f(t + C_\gamma)}  (\varphi_\gamma)^* h_N .
\]
Since we cannot at this stage ensure that $f$ is periodic, we only have
\[
 (\varphi_\gamma)^* h_N  = e^{-2f(t + C_\gamma) + 2f(t)} h_N.
\]
Now we apply the Fundamental Theorem of Calculus and $df = \alpha \, dt$ to obtain:
\[
 (\varphi_\gamma)^* h_N  = e^{-2 \int _t ^{t + C_\gamma}\alpha (s) ds} h_N.
\]
Finally, we use the identity $\tau_\gamma^* \alpha = \alpha$.
Thus, for all $t \in \mathbb{R}$, we have $\alpha (t + C_\gamma) = \alpha(t)$; consequently, 
\begin{equation}\label{eq:4.2}
    (\varphi_\gamma)^* h_N  = e^{-2 \kappa_\gamma} h_N
\end{equation}
where $\kappa_\gamma= \int _t ^{t + C_\gamma}\alpha (s)\, ds$ is constant and independent of $t$.

\begin{remark}\label{rmk:4.3}
Note that:
    \begin{enumerate}
        \item If $C_\gamma = 0$, then $\kappa_\gamma = 0$ and, consequently, $(T_\gamma)^* h = h$.
        \item The Weyl structure $\nabla$ is non-exact if and only if there exists a curve $\gamma \in \pi_1(M)$ such that  $\kappa_{\gamma} \neq 0$, $(\tau _\gamma)^* dt = dt$ and $C_\gamma \neq 0$.
    \end{enumerate}
\end{remark}
  
\subsection{The compact case} In~\cite{K2019}, the author studies similarity structures on compact manifolds.
Since their universal covers admit Riemannian metrics, several holonomy conditions are established in~\cite{K2019}.
In particular, the following theorem is addressed.

    \begin{theorem}[{\cite[Thm.~1.5]{K2019}}]\label{thm:K2019}
    Let $(M,g)$ be a compact Riemannian manifold, and let $\nabla$ be a closed Weyl structure whose Lee form is $\theta$.
    Let $(\tilde{M}, \tilde{g})$ be the universal cover of $(M,g)$ and let $\tilde{\theta}$ and $\tilde{\nabla}$ denote the lifts of $\theta$ and $\nabla$, respectively.
    Since $\tilde{\theta}$ is closed and $\tilde{M}$ is simply connected, it follows that $\tilde{\theta} = df$ and $\tilde{\nabla}$ is the Levi-Civita of $e^{-2f}\tilde{g}$.
    Then one of the following four cases must occur:
	\begin{enumerate}
		\item The 1-form $\theta$ is exact.
		\item $(\tilde{M}, e^{-2f}\tilde{g})$ is flat.
		\item $(\tilde{M}, e^{-2f}\tilde{g})$ is irreducible.
		\item $(\tilde{M}, e^{-2f}\tilde{g})$ is isometric to $\mathbb{R} ^q \times N$, with $q > 0$ and where $N$ is non-flat, non-complete and with irreducible holonomy.
	\end{enumerate}
\end{theorem}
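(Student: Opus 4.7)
The plan is to prove the four-case alternative by excluding cases (1), (2) and (3) and showing that what remains must be case (4). Assume therefore that $\theta$ is non-exact, that $(\tilde{M}, h := e^{-2f}\tilde{g})$ is not flat, and that its holonomy is reducible. Non-exactness of $\theta$ is equivalent to the ratio homomorphism $\rho\colon \pi_1(M)\to \mathbb{R}_{>0}$, $\gamma \mapsto \lambda_\gamma$, being non-trivial, so $\pi_1(M)$ acts on $(\tilde{M}, h)$ through a group of similarities containing at least one genuine contraction or expansion. Cocompactness of this action, inherited from the compactness of $M$, is the global ingredient that drives the argument.

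First apply the de Rham--Wu decomposition. Since $\tilde{M}$ is simply connected, the holonomy representation of $h$ at a basepoint yields a splitting of the tangent bundle $T\tilde{M} = \mathcal{F} \oplus \mathcal{E}_1 \oplus \cdots \oplus \mathcal{E}_k$ into pairwise orthogonal, integrable, parallel distributions, with $\mathcal{F}$ the maximal flat parallel distribution and each $\mathcal{E}_i$ having non-flat, irreducible holonomy. The reducibility and non-flatness assumptions force this decomposition to be non-trivial with at least one non-flat summand $\mathcal{E}_i$ present. Since $h$ need not be complete, this is only a local product decomposition at this stage.

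The central step, and the main technical obstacle, is to upgrade this infinitesimal splitting to a global Riemannian product $\tilde{M} \cong \mathbb{R}^q \times N$ with $q>0$ and $N$ non-flat of irreducible holonomy. One passes to a finite-index subgroup $\Gamma' \subset \pi_1(M)$ which preserves each summand of the de Rham decomposition individually, using that $\pi_1(M)$ permutes the finite set of irreducible factors. The key claim is that the leaves of $\mathcal{F}$ are complete for $h$: this is where the classical rigidity principle enters, namely that a complete non-flat Riemannian manifold admits no non-trivial similarity, so any non-trivial scaling carried by $\Gamma'$ must be absorbed by the flat factor $\mathcal{F}$. Combined with the cocompactness of $\Gamma'$, this forces the flat leaves to be complete and hence isometric to a Euclidean space $\mathbb{R}^q$, with $q\geq 1$ produced simultaneously by the same rigidity. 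Given completeness of the flat factor, \ref{thm:PG1993} globalises the decomposition to $\tilde{M} = \mathbb{R}^q \times N$.

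It remains to verify that $N$ is non-flat, non-complete and of irreducible holonomy. Non-flatness is automatic: were $N$ flat, $\tilde{M}$ itself would be flat, contrary to the reduction. If $N$ were still reducible, the de Rham decomposition of $N$ would introduce further non-flat parallel summands, and applying the rigidity principle to each of them together with the projected action of $\Gamma'$ would produce a contracting similarity on a complete non-flat manifold, a contradiction; hence $N$ has irreducible holonomy. Finally, $N$ cannot be complete, because then $\tilde{M}$ would be a complete Riemannian product, the similarity group would split as $\mathrm{Sim}(\mathbb{R}^q)\times \mathrm{Isom}(N)$ by the same rigidity, and a direct analysis of the cocompact discrete action of $\pi_1(M)$ by similarities on such a product with non-trivial ratio homomorphism yields incompatibility with freeness or compactness of the quotient $M$. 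This places us in case (4). The main obstacle throughout is the completeness claim for the flat leaves of $\mathcal{F}$; once it is in hand, \ref{thm:PG1993} and similarity rigidity assemble the rest.
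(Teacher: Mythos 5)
This statement is not proved in the paper at all: it is imported verbatim from Kourganoff \cite[Thm.~1.5]{K2019} and used as a black box (only in the proof of~\ref{thm:4.5}), so there is no internal proof to compare your attempt against. Judged on its own terms, your outline names the right ingredients --- the infinitesimal de Rham--Wu splitting of $(\tilde M,h)$, the rigidity of complete non-flat Riemannian manifolds under strict similarities (which is essentially what the paper later isolates as~\ref{lem:4.7}), and the cocompact similarity action of $\pi_1(M)$ --- but it does not constitute a proof.

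The genuine gap is the step you yourself flag as ``the central step'': completeness of the leaves of the flat factor $\mathcal{F}$. Your justification is circular. The rigidity principle you invoke (``a complete non-flat Riemannian manifold admits no non-trivial similarity'') can only be applied to the non-flat de Rham summands once you know they are complete, or once you already have the global product decomposition; but the global decomposition is precisely what you are trying to obtain via~\ref{thm:PG1993}, and for that you need the completeness of the leaves of $\mathcal{F}$ first. Saying that the scaling ``must be absorbed by the flat factor'' and that cocompactness then ``forces the flat leaves to be complete'' is a restatement of the desired conclusion, not an argument: a priori a deck transformation with $\lambda_\gamma\neq 1$ could act with non-trivial ratio along an incomplete non-flat leaf, and nothing you have written rules this out. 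The same circularity affects your claim that $q\geq 1$ is ``produced simultaneously by the same rigidity.'' In \cite{K2019} this completeness statement is the main technical content of the paper, established through a delicate analysis of the closure of the developing leaves and of minimal sets for the similarity action, not by a soft rigidity-plus-cocompactness argument. The final exclusion (``$N$ cannot be complete'') is likewise asserted via ``a direct analysis \dots yields incompatibility'' without the analysis being given; note that a strict similarity of $\mathbb{R}^q\times N$ with $N$ complete still acts freely on the complement of its fixed fibre, so freeness alone does not give the contradiction. As it stands, the proposal is a plan that correctly locates the difficulty but does not overcome it.
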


\begin{theorem}\label{thm:4.5}
    Let $(M,g)$ be a compact Riemannian manifold, and let $\nabla$ be a closed Weyl structure whose Lee form is $\theta$.
    Suppose that $\nabla$ is reducible in the direction of the Lee form.
    Then $\nabla$ is flat or exact.
\end{theorem}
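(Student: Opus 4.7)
The plan is to combine Lemma~\ref{lem:4.1} with the compact-case dichotomy of Theorem~\ref{thm:K2019}. Since $M$ is compact, Theorem~\ref{thm:K2019} offers four mutually exclusive possibilities for $(\tilde M, h)$, where $h = e^{-2f}\tilde g$: (1) $\theta$ is exact; (2) $(\tilde M, h)$ is flat, in which case its Levi-Civita connection $\tilde\nabla$ is flat and hence so is $\nabla$; (3) $(\tilde M, h)$ is irreducible; or (4) $(\tilde M, h) \cong \mathbb{R}^q \times N_{0}$ globally, with $q \ge 1$ and $N_{0}$ non-flat, non-complete, and irreducible (I write $N_{0}$ to avoid clashing with the $N$ of Lemma~\ref{lem:4.1}). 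In cases (1) and (2) the conclusion is immediate, so the task reduces to ruling out (3) and (4) under the hypothesis that $\nabla$ is reducible in the direction of the Lee form.

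By Lemma~\ref{lem:4.1}, $(\tilde M, \tilde g) \cong (\mathbb{R} \times N, dt^2 + e^{2f(t)} h_N)$ with $(N, h_N)$ complete, so $(\tilde M, h)$ is isometric to the Riemannian product $(L \times N, ds^2 + h_N)$, where $L$ is the image of the map $t \mapsto \int_0^t e^{-f(r)}\, dr$. Let $\zeta$ denote the unit parallel vector field tangent to $L$. Option (3) is eliminated immediately, because $\mathbb{R}\zeta$ is a non-trivial parallel rank-$1$ sub-bundle of $T\tilde M$, so the holonomy of $h$ is reducible.

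The main step is excluding option (4). Decompose $\zeta = \zeta_{q} + \zeta_{N_{0}}$ along the parallel splitting $T\tilde M = T\mathbb{R}^q \oplus TN_{0}$; both summands are parallel because projections onto parallel sub-bundles commute with $\tilde\nabla$. Since $N_{0}$ is non-flat (so $\dim N_{0} \ge 2$) and irreducible, it admits no non-zero parallel vector field, forcing $\zeta_{N_{0}} = 0$ and $\zeta \in T\mathbb{R}^q$. Choosing coordinates so that $\zeta = \partial / \partial x^{1}$, the leaf of $\zeta^{\perp}$ through a point is $\{x_{0}^{1}\} \times \mathbb{R}^{q-1} \times N_{0} \cong \mathbb{R}^{q-1} \times N_{0}$, which is precisely the factor $N$ from Lemma~\ref{lem:4.1}. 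But $N$ is complete, whereas $\mathbb{R}^{q-1} \times N_{0}$ is not (as $N_{0}$ is non-complete), a contradiction. I expect the main obstacle to be reconciling the two a priori different global product descriptions of $(\tilde M, h)$: the parallel-projection argument is the natural bridge, with irreducibility of $N_{0}$ pinning the $L$-direction inside the Euclidean factor, after which the completeness mismatch closes the case.
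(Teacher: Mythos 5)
Your proof is correct and follows essentially the same route as the paper: combine~\ref{lem:4.1} with~\ref{thm:K2019}, then rule out cases (3) and (4) using the product splitting $L\times N$ of $(\tilde M,h)$ and the completeness of $N$. Your treatment of case (4) merely makes explicit the step the paper leaves implicit, namely that irreducibility of $N_{0}$ forces the parallel line field into the Euclidean factor, so that $N\cong\mathbb{R}^{q-1}\times N_{0}$ would be incomplete.
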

\begin{proof}
     Using~\ref{lem:4.1} and~\ref{thm:K2019}, we have that $(\tilde{M}, h \coloneqq e^{-2f}\tilde{g})$ decomposes into two Riemannian manifolds.
     Thus, Case~(3) is not possible.
     Furthermore, since $L$ is flat and $N$ is complete, it follows that Case~(4) cannot occur.
     Consequently, the Weyl structure $\nabla$ is flat or exact.
\end{proof}

Note that~\ref{thm:1.1} generalizes~\ref{thm:4.5} without the compactness assumption.
Therefore, we are now ready to prove it.

\noindent \textbf{Proof of~\ref{thm:1.1}.}
    It remains to treat the non-exact case.
    We keep the notation of~\ref{lem:4.1} and~\ref{rmk:4.3}.
    Based on the discussion above~\ref{rmk:4.3}, we conclude that $\nabla$ is non-exact if and only if there exists a closed curve $\gamma \in \pi_1(M)$ such that its deck transformation $T_\gamma \colon L \times N \longrightarrow L \times N$ satisfies
    \[
        T_{\gamma} (r, \tilde{q}) = (r + C_{\gamma}, \varphi_{\gamma} (\tilde{q})), \quad \text{ for all } (r,\tilde{q}) \in \mathbb{R} \times N,
    \]
    where $C_\gamma$ is a constant that satisfies $\alpha (t + C_\gamma) = \alpha (t)$, and $\kappa_\gamma = \int _t ^{t + C_\gamma}\alpha (s)\, ds$ is a non-zero constant.

    From~\eqref{eq:4.2}, we find that $\varphi_\gamma$ is a similarity of $(N,h_N)$ of ratio $e^{-\kappa_\gamma} >0$.
    Without loss of generality, we assume that $\kappa_\gamma > 0$; otherwise, we consider $\varphi_\gamma ^{-1}$ instead of $\varphi_\gamma$.
    
    Let $d_N$ denote the length-minimizing distance in $(N, h_N)$.
    Since $(N, h_N)$ is a complete connected Riemannian manifold, the metric space $(N,d_N)$ is also complete.
    Note that $\varphi_\gamma$ is a similarity of ratio $e^{-\kappa_\gamma} < 1$ and therefore it is a contraction of the metric space $(N,d_N)$.
    By the Banach Fixed-Point Theorem, there exists a unique fixed point $q\in N$ for $\varphi_\gamma$.
    Furthermore, for every $x_0\in N$, the sequence $\{x_k \coloneqq\varphi_\gamma ^k (x_0)\}_{k \in \mathbb{N}}$ converges to $q$.

    This last part of the proof is inspired by~\cite[Prop.~3.5]{K2019}.
    Remember that $\tilde \nabla$ is the Levi-Civita connection on $(\tilde{M},h)$.
    Furthermore, $(\tilde{M}, h)$ is isometric to a product of two Riemannian manifolds $(\mathbb{R}, dt^2) \times (N,h_N)$.
    Thus, $\tilde{\nabla}$ is flat if and only if $(N,h_N)$ is flat.
    
    Let $\nabla^{h_N}$ be the Levi-Civita connection on $(N,h_N)$ and let $R_p$ denote the $(0,4)$-curvature tensor of $\nabla ^{h_N}$ at $p \in N$.
    Since $\varphi_\gamma \in \mathrm{Aff}(N,\nabla ^{h_N})$, it follows that $\varphi_\gamma ^* R = e^{-2\kappa_\gamma} R$.
    In particular,
    \[
     \left\|R_{p} \right\|^2 \;=\; e^{4\kappa_\gamma r}\left\| ((\varphi_\gamma ^r)^* R)_{p} \right\|^2 \;=\; e^{4 \kappa_\gamma r} e^{-8 \kappa_\gamma r} \left\|R_{\varphi_\gamma ^r(p)} \right\|^2
    \]
    where $\| R_p\|^2 = \sum_{i,j,k,l=1} ^{n-1} \! R_p (e_i,e_j,e_k,e_l)^2$, and $\{e_1, \dots, e_{n-1}\}$ is an orthonormal basis in $T_pN$, where $n = \dim M$.
    Taking the limit as $r \rightarrow \infty$, we obtain that $R_p = 0$ for every $p \in N$.
    Thus $\nabla^{h_N}$ is flat.\hfill$\blacksquare$

    We now prove that if $\nabla$ is flat, then $(N,h_N)$ is isometric to $\mathbb{R}^{n-1}$, where $n = \dim M$.
    Let $B(0,\lambda) \subset \mathbb{R} ^{n-1}$ and $B(q,\lambda) \subset N$ be the open balls centered at $0$ and $q$, respectively, each of radius $\lambda >0$.
    Because $R$ is flat, there exists an $\varepsilon >0$ such that the exponential map $\exp _q : B(0,\varepsilon)\longrightarrow B(q, \varepsilon)$ is an isometry.
    Let $\phi$ be the homothety of $\mathbb{R}^{n-1}$ centered at $0$ of ratio $e^{-\kappa_\gamma} >0$.
    Then, for each $r\in \mathbb{N}$, we set 
    \begin{align*}
        \Phi_r \colon B(0, e^{\kappa_\gamma r} \varepsilon) &\longrightarrow B(q,e^{\kappa_\gamma r} \varepsilon) \\
        x &\longmapsto  \left(\varphi_\gamma ^{-r}\circ\exp_q \circ \,\phi^{r} \right)(x).
    \end{align*}
    Since $\Phi_r$ is the composition of two inverse-ratio similarities and one isometry, it follows that $\Phi_r$ is an isometry.
    Furthermore, the differentials $d\Phi_r$ at $q$ coincide for all $r \in \mathbb{N}$.
    Thus, for each $r_0\in \mathbb{N}$, if $x \in B(0, e^{\kappa_\gamma r_0} \varepsilon)$, then $\Phi_{r_0} (x) = \Phi_r (x)$ for all $r\geq r_0$.
    Define $\Phi \colon \mathbb{R}^{n-1} \longrightarrow N$ by $\Phi(x) = \Phi_{r(x)} (x)$, where  $r(x)$ is the first natural number for which $\phi^{r(x)} (x) \in B(0,\varepsilon)$.
    This map $\Phi$ is a global isometry; hence $(N,h_N)$ is isometric to the Euclidean space.

From the proof of~\ref{thm:1.1}, together with the subsequent paragraph, we deduce the following result, which will be used in Section~\ref{sec:apps}.

\begin{proposition}\label{lem:4.7}
    Let $(N, h_N)$ be a complete connected Riemannian manifold of dimension $m$ and let $\varphi$ be a similarity of $(N,h_N)$ of ratio $\lambda < 1$.
    Then $N$ is isometric to $\mathbb{R}^m$.
\end{proposition}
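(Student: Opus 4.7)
The proposition is essentially the content of the final part of the proof of~\ref{thm:1.1}; my plan is to isolate that argument into three clean steps. First, since $\varphi^{*}h_{N}=\lambda^{2}h_{N}$ with $\lambda<1$, the map $\varphi$ is a strict contraction on the complete metric space $(N,d_{N})$ obtained from the Riemannian distance of $h_{N}$. The Banach fixed-point theorem then yields a unique fixed point $q\in N$, and for every $p\in N$ the orbit $\{\varphi^{k}(p)\}_{k\in\mathbb{N}}$ converges to $q$.

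Second, I would establish that $(N,h_{N})$ is flat. Because the Levi-Civita connection of $h_{N}$ coincides with that of $\lambda^{2}h_{N}=\varphi^{*}h_{N}$, the similarity $\varphi$ is an affine map of $\nabla^{h_{N}}$, and a direct tensorial computation using $\varphi^{*}h_{N}=\lambda^{2}h_{N}$ gives $\varphi^{*}R^{h_{N}}=\lambda^{2}R^{h_{N}}$ as a $(0,4)$-tensor. Comparing pointwise norms with respect to an orthonormal frame at $p$ and its $d\varphi_{p}$-image rescaled by $\lambda^{-1}$ at $\varphi(p)$ yields the identity
\[
\|R^{h_{N}}_{p}\|^{2}=\lambda^{4r}\,\|R^{h_{N}}_{\varphi^{r}(p)}\|^{2}\qquad\text{for every }r\in\mathbb{N}.
\]
Since $\varphi^{r}(p)\to q$ and $\|R^{h_{N}}\|^{2}$ is continuous, the right-hand side tends to zero as $r\to\infty$; hence $R^{h_{N}}\equiv 0$ and $(N,h_{N})$ is flat.

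Third, I construct a global isometry with Euclidean space. By flatness there exists $\varepsilon>0$ such that $\exp_{q}\colon B(0,\varepsilon)\subset T_{q}N\to B(q,\varepsilon)\subset N$ is an isometry. Identify $T_{q}N\simeq\mathbb{R}^{m}$ and let $\phi$ denote the homothety of $\mathbb{R}^{m}$ centered at $0$ of ratio $\lambda$. For each $r\in\mathbb{N}$, the map
\[
\Phi_{r}:=\varphi^{-r}\circ\exp_{q}\circ\phi^{r}\colon B(0,\lambda^{-r}\varepsilon)\longrightarrow B(q,\lambda^{-r}\varepsilon)
\]
is an isometry, being the composition of two inverse-ratio similarities with an isometry. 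All $\Phi_{r}$ have the same $1$-jet at the origin, so by the rigidity of local isometries between flat manifolds they agree on their common domains and glue to a global isometric immersion $\Phi\colon\mathbb{R}^{m}\to N$. Its image $\bigcup_{r}B(q,\lambda^{-r}\varepsilon)$ is both open and closed in $N$ (the latter because it is the image of a complete space under an isometry), so by connectedness it equals $N$, showing $N\simeq\mathbb{R}^{m}$.

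The main obstacle is the gluing step in the third part: one has to verify that the $\Phi_{r}$ are strictly compatible on overlaps. The cleanest way is to invoke the rigidity of local isometries between complete flat manifolds sharing a $1$-jet at a point, rather than checking compatibility by an induction on $r$; the curvature-decay computation in Step~2, although conceptually easy, is also where sign and norm conventions must be handled with some care.
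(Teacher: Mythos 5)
Your proposal is correct and follows essentially the same route as the paper, which proves this proposition inside the proof of~\ref{thm:1.1}: Banach fixed point for the contraction $\varphi$, the curvature-scaling identity $\varphi^{*}R=\lambda^{2}R$ forcing $R\equiv 0$ along the orbit limit, and the gluing of the rescaled exponential charts $\Phi_{r}=\varphi^{-r}\circ\exp_{q}\circ\,\phi^{r}$ into a global isometry. Your closing open-and-closed argument for surjectivity is a welcome extra detail that the paper leaves implicit (there one can also just note that the balls $B(q,\lambda^{-r}\varepsilon)$ exhaust $N$ by completeness and connectedness).
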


We now study a simple criterion for exactness.
If $(\tilde M,h)$ is complete, then $\nabla$ is exact.
To see this, note that any deck transformation belongs to $\mathrm{Aff}(\tilde{M}, \tilde{\nabla}) = \mathrm{Sim}(\tilde{M},h)$.
Suppose $\nabla$ were non-exact; then there would exist a deck transformation $T$ which is a similarity of ratio $\lambda \neq 1$.
By the Banach Fixed-Point Theorem, either $T$ or $T^{-1}$ must have a fixed point, contradicting the fact that non-trivial deck transformations act freely.
This contradiction shows $\nabla$ is exact.

\section{Applications}\label{sec:apps}

In this section, we study complete Kenmotsu manifolds.
Let $\alpha$ be a non-zero function and $(M,g,\varphi,\xi, \eta)$ be an $\alpha$-Kenmotsu manifold (see~\ref{ex:3.3} for the definition).

Recall that the Weyl structure $\nabla$ in~\ref{ex:3.3}, whose Lee form is $\theta = \alpha \, \eta$, is reducible in the direction of the Lee form.
Furthermore, the Lee form is closed if and only if
\begin{equation}\label{eq:5.1}
    X(\alpha) = 0, \quad  \forall X\in \{A \in \mathfrak{X}(M):\: g(A,\xi) = 0 \}.
\end{equation}
In particular, when $\alpha\equiv1$ (the classical Kenmotsu case), condition~\eqref{eq:5.1} holds automatically.
Hence, any Weyl structure $\nabla$ from~\ref{ex:3.3} that satisfies~\eqref{eq:5.1} satisfies the hypotheses of~\ref{thm:1.1}.

\begin{corollary}\label{cor:5.1}
    Let $(M,g,\varphi,\xi, \eta)$ be a complete $\alpha$-Kenmotsu manifold such that~\eqref{eq:5.1} holds.
    Then the following statements hold:
    \begin{enumerate}
        \item If $\nabla$ is exact, then $(M,g,\varphi,\xi, \eta)$ is globally conformal to a coKähler manifold (see~\ref{rmk:5.2}).
        \item If $\nabla$ is non-exact, then $(M,g)$ (with $\dim M = 2m +1$) is isometric to a proper quotient of the warped product $(\mathbb{R}, dt^2) \times_f (\mathbb{R} ^{2m}, h, J)$, where $(\mathbb{R}^{2m}, h, J)$ is the standard Kähler structure, and $f\in \mathcal{C}^{\infty}(\mathbb{R})$.
    \end{enumerate}
\end{corollary}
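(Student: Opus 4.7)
The plan is to apply Theorem~\ref{thm:1.1} directly to the Weyl structure $\nabla$ constructed in Example~\ref{ex:3.3}, whose Lee form is $\theta=\alpha\eta$. Under hypothesis~\eqref{eq:5.1}, $\theta$ is closed and, by~\ref{ex:3.3}, $\nabla$ is reducible in the direction of the Lee form. Completeness of $(M,g)$ is preserved. Thus Theorem~\ref{thm:1.1} forces $\nabla$ to be either flat or exact, and these two alternatives correspond exactly to statements (2) and (1) of the corollary.

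For part (1) I would argue as follows. Write $\theta=df$ for some $f\in\mathcal{C}^\infty(M)$ and set $h\coloneqq e^{-2f}g$. As explained in Section~\ref{sec:prelim}, $\nabla$ is the Levi–Civita connection of $h$. Define a new almost contact metric structure $(h,\bar\varphi,\bar\xi,\bar\eta)$ by $\bar\varphi\coloneqq\varphi$, $\bar\xi\coloneqq e^{f}\xi$, $\bar\eta\coloneqq e^{-f}\eta$; the axioms $\bar\eta(\bar\xi)=1$ and $h(\bar\varphi X,\bar\varphi Y)=h(X,Y)-\bar\eta(X)\bar\eta(Y)$ are immediate. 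For coKähler it remains to check $\nabla^{h}\bar\xi=0$ and $\nabla^{h}\bar\varphi=0$. The first follows by combining $\nabla_X\xi=-\theta(X)\xi$ from~\eqref{eq:3.5} with the product rule. The second reduces to showing $\nabla\varphi=0$: expanding $(\nabla_X\varphi)Y=\nabla_X(\varphi Y)-\varphi(\nabla_XY)$ via~\eqref{eq:2.1} and~\eqref{eq:3.4}, the identities $\theta(\varphi Y)=0$ and $\varphi(\theta^\sharp)=\alpha\varphi\xi=0$ cause the Weyl correction terms to cancel exactly against $(\nabla^g_X\varphi)Y$. Therefore $(M,h,\bar\varphi,\bar\xi,\bar\eta)$ is coKähler, and $g=e^{2f}h$ witnesses the claimed global conformality.

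For part (2), suppose $\nabla$ is non-exact. By Theorem~\ref{thm:1.1}, $\nabla$ is flat; by Lemma~\ref{lem:4.1}, the universal cover $(\tilde M,\tilde g)$ is isometric to a warped product $(\mathbb{R}\times N,\,dt^2+e^{2f(t)}h_N)$ with $(N,h_N)$ complete. As explained after the proof of Theorem~\ref{thm:1.1}, non-exactness of $\nabla$ produces a strict similarity of $(N,h_N)$, so by Proposition~\ref{lem:4.7} the leaf $N$ is isometric to $(\mathbb{R}^{2m},h)$ with its Euclidean metric. To extract the Kähler factor, I would lift $\varphi$ to a $\tilde\nabla$-parallel $(1,1)$-tensor $\tilde\varphi$ on $\tilde M$; the parallelism is the identity $\nabla\varphi=0$ from part (1) pulled back. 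Because $\tilde\varphi$ preserves the splitting $\tilde{\mathcal{I}}\oplus\tilde{\mathcal{D}}$ and annihilates $\tilde{\mathcal{I}}=\mathbb{R}\,\partial_t$, it restricts to a $(1,1)$-tensor $J$ on each leaf $N$ with $J^2=-\mathrm{Id}$; compatibility $h_N(JX,JY)=h_N(X,Y)$ comes from the almost contact metric identity restricted to $\mathcal{D}$, after dividing out the warping factor $e^{2f(t)}$, and parallelism of $J$ with respect to $\nabla^{h_N}$ follows from $\tilde\nabla\tilde\varphi=0$ together with the product structure of $(\tilde M,h)$. A parallel compatible complex structure on flat $\mathbb{R}^{2m}$ is the standard one up to linear isomorphism, so $(N,h_N,J)$ is the standard Kähler $\mathbb{R}^{2m}$. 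Finally, by Remark~\ref{rmk:4.3}(2), non-exactness is equivalent to the existence of a non-trivial deck transformation with $C_\gamma\neq0$, so the quotient $\tilde M\to M$ is proper.

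The main obstacle I expect is the careful bookkeeping in part (2): one must track the difference between $\tilde g$ and $h=e^{-2f}\tilde g$ when transferring the parallelism of $\tilde\varphi$ from $(\tilde M,h)$ (a genuine product) to the warped product $(\tilde M,\tilde g)$, and then justify that the restriction $J$ of $\tilde\varphi$ to the $N$-factor is independent of $t$ and genuinely $\nabla^{h_N}$-parallel on $(N,h_N)$ rather than merely fiberwise defined. Everything else—the computation $\nabla\varphi=0$ and the verification of coKähler axioms in part (1), and the identification of the $\mathbb{R}^{2m}$ factor via Proposition~\ref{lem:4.7}—is essentially bookkeeping once the main structural results from the previous sections are invoked.
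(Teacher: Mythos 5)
Your proposal is correct and follows essentially the same route as the paper: apply Theorem~\ref{thm:1.1} to split into the exact and flat cases, perform the conformal change $h=e^{-2f}g$ and verify the coKähler axioms via the cancellation of the Weyl correction terms against $(\nabla^g_X\varphi)Y$ in case (1), and combine flatness, Lemma~\ref{lem:4.1} and the Euclidean identification of the leaf $N$ with the restriction of $\varphi$ to $\mathcal{D}$ in case (2). The only cosmetic difference is that the paper identifies $J$ by noting directly that $(\nabla^g_X\varphi)Y$ vanishes for $X,Y\in\mathcal{D}$, whereas you route through the Weyl-parallelism $\nabla\varphi=0$; both yield the same conclusion.
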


\begin{remark}\label{rmk:5.2}
    Recall that an almost contact metric manifold $(M,g,\varphi,\xi, \eta)$ is coKähler if and only if
    \[
    \nabla^g \varphi = 0, \quad \nabla^g \xi = 0, \quad \nabla^g \eta = 0.
    \]
\end{remark}

\begin{proof}
    If $\nabla$ is exact, there exists $f\in \mathcal{C}^{\infty}(M)$ such that $\theta = df$.
    Consequently, $g$ is conformally equivalent to $h := e^{-2f}g$, whose Levi-Civita connection is $\nabla$.
    First, we verify that $(M,h,\varphi,e^{f}\xi,e^{-f}\eta)$ is an almost contact metric manifold: 
    \[
    \varphi ^2 X \;=\; -X + \eta (X) \xi \;=\; - X + ( e^{-f} \eta)(X)\, e^{f}\xi
    \]
    and 
    \begin{align*}
    h (\varphi X, \varphi Y) \;=\; e^{-2f}g(\varphi X, \varphi Y) &\;=\; e^{-2f} \big(g(X, Y) - \eta(X) \eta(Y)\big) \\
    &\;=\; h(X, Y) - (e^{-f} \eta)(X) ( e^{-f} \eta)(Y)
    \end{align*}
    for all $X$, $Y\in \mathfrak{X}(M)$.
    Next, we show that the structure is coKähler.
    Since $\nabla \xi = -\theta \otimes \xi$ (see~\eqref{eq:3.5}), we obtain $\nabla (e^{f}\xi) = 0$.
    It remains to show that $\nabla \varphi = 0$:
    \begin{align*}
        (\nabla_X\varphi)(Y)
            &\;=\; \nabla_X\bigl(\varphi Y\bigr)-\varphi(\nabla_XY) \\[2pt]
            &\;=\; \left(\nabla^{g}_{X} \varphi\right) (Y)
                -\underbrace{\theta\bigl(\varphi Y\bigr)}_{=0}\,X
                + g\bigl(X,\varphi Y\bigr)\,\theta^{\sharp}
                -\cancel{\theta(X)\,\varphi Y} \\[2pt]
            &\hspace{2.5cm} +\theta(Y)\,\varphi X
                - g(X,Y)\,\underbrace{\varphi \theta^{\sharp}}_{=0}
                +\cancel{\theta(X)\,\varphi Y}
    \end{align*}
    Finally, using~\eqref{eq:3.4}, we obtain that $\left(\nabla^{g}_{X} \varphi\right) (Y) = -\theta(Y)\,\varphi X - g\bigl(X,\varphi Y\bigr)\,\theta^{\sharp}$.
    Consequently, $(M, h, \varphi, e^{f}\xi, e^{-f} \eta)$ is a coKähler manifold.

    If $\nabla$ is non-exact, then, by~\ref{thm:1.1}, $\nabla$ is flat.
    Consequently, by~\ref{lem:4.1}, the universal cover $(\tilde{M},\tilde{g})$ of $(M,g)$ is globally isometric to $(\mathbb{R} \times \mathbb{R}^{2m}, dt^2 + e^{2f(t)}\,h)$, where $h$ is the Euclidean metric of $\mathbb{R}^{2m}$.
    Finally, we observe that
    \[
        (\nabla^g _X \varphi )Y = -\alpha\big( g(X,\varphi Y)\,\xi+\eta(Y)\,\varphi X\big)
    \]
    vanishes identically if we choose $X$, $Y\in \mathcal{D} = \{ X\in \mathfrak{X}(M):\: g(X, \xi) = 0 \}$.
    Thus, $J \coloneqq \tilde{\varphi}|_{\mathbb{R}^{2m}}$ is compatible with the metric $h$ and parallel with respect to the Euclidean metric of $(\mathbb{R} ^{2m}, h)$.
    Since every Kähler structure on $\mathbb{R}^{2m}$ compatible with the Euclidean metric is isometric to the standard Kähler structure of $\mathbb{R}^{2m}$, the proof is complete.
\end{proof}

Therefore, \ref{cor:5.1} yields the following classification of complete Kenmotsu manifolds:

\begin{corollary}
    Every complete Kenmotsu manifold is globally conformal to a coKähler manifold or is locally conformally flat.
\end{corollary}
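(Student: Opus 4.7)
The strategy is to apply Corollary~\ref{cor:5.1} with $\alpha \equiv 1$. Classical Kenmotsu manifolds trivially satisfy condition~\eqref{eq:5.1} because $X(1)=0$, so both alternatives of that corollary are available. Case~(1) of Corollary~\ref{cor:5.1} already delivers the ``globally conformal to coKähler'' alternative of the target statement verbatim, so the only real work is to extract local conformal flatness from Case~(2).

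To do so, I would first pin down the warping exponent on the universal cover. From Lemma~\ref{lem:4.1} we have $\tilde{\theta}=df$ on $\tilde{M}$, and in the warped-product coordinates $(t,\tilde{q}) \in \mathbb{R}\times N$ the relation $\tilde{\theta}\wedge dt=0$ forces $\tilde{\theta}=\alpha\, dt$ for some function $\alpha$. A quick bookkeeping check identifies this $\alpha$ with the Kenmotsu scalar (the distinguished direction $dt$ is the lift of $\eta$ up to the rescaling $\zeta=e^f\tilde{\xi}$ used in the proof of Lemma~\ref{lem:4.1}, so $\tilde{\theta}=\alpha_{\mathrm{Ken}}\,\tilde{\eta}=\alpha_{\mathrm{Ken}}\,dt$). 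For a classical Kenmotsu structure this gives $\alpha\equiv 1$ and $f(t)=t+c_{0}$ for a constant $c_{0}$. Hence the universal cover is isometric to $(\mathbb{R}\times\mathbb{R}^{2m},\, dt^{2}+e^{2t+2c_{0}}\,h)$ with $h$ the Euclidean metric on $\mathbb{R}^{2m}$.

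The final step is to recognize this as a half-space representation of hyperbolic space. The substitution $y=e^{-t-c_{0}}$ converts the metric into $y^{-2}(dy^{2}+h)$ on $\{y>0\}\times\mathbb{R}^{2m}\subset\mathbb{R}^{2m+1}$, which is conformal to the flat Euclidean metric. Thus the universal cover is globally conformally flat, and this property descends to $M$ through the covering projection, yielding local conformal flatness. There is no genuine obstacle in this argument: once Corollary~\ref{cor:5.1} is available, everything reduces to identifying the warping exponent and recalling that the warped product $\mathbb{R}\times_{e^{t}}\mathbb{R}^{2m}$ is (up to rescaling) the hyperbolic space $\mathbb{H}^{2m+1}$, which is conformally flat.
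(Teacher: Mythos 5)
Your argument is correct and follows the same route the paper intends: the corollary is stated as an immediate consequence of \ref{cor:5.1} (condition \eqref{eq:5.1} being automatic for $\alpha\equiv 1$), with Case~(1) giving the coK\"ahler alternative and Case~(2) giving local conformal flatness. Your explicit identification of the universal cover with hyperbolic space via $f(t)=t+c_0$ is a valid (and correctly bookkept) way to see the flatness of the conformal class, though it is slightly more than needed, since the flatness of $\nabla$ in Case~(2) already exhibits $\tilde g = e^{2f}\cdot(e^{-2f}\tilde g)$ as conformal to a flat metric for arbitrary warping function.
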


\subsection{Homogeneous Kenmotsu Manifolds}
We now turn our attention to the homogeneous setting.
We say that an $\alpha$-Kenmotsu manifold $(M,g,\varphi,\xi, \eta)$ is homogeneous when a Lie group $G \subset \mathrm{Isom}(M,g)$ acts transitively and preserves $\varphi$.
Note that, since $\varphi(\xi)= 0$ and $\eta (\,\cdot\,) = g(\,\cdot\,, \xi)$, preserving $\varphi$ implies preserving $\xi$ and $\eta$.
For homogeneous Kenmotsu manifolds, with $\alpha \equiv 1$, \cite{L2023} proved the following:

\begin{theorem}[\cite{L2023}]\label{thm:L2023} 
    Every homogeneous Kenmotsu manifold is a contractible space.
\end{theorem}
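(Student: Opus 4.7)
My plan is to combine the warped-product splitting from \ref{lem:4.1} with \ref{lem:4.7} to identify the universal cover of a homogeneous Kenmotsu manifold with the real hyperbolic space, and then to use a centralizer argument based on the transitivity of the lifted action to show that the fundamental group is trivial, so that $M$ is diffeomorphic to $\mathbb R^{\dim M}$.

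First, a homogeneous Riemannian manifold is automatically complete, and by \ref{ex:3.3} the canonical Weyl structure $\nabla$ of a Kenmotsu manifold has closed, nowhere-vanishing Lee form $\theta = \eta$ that is reducible in its own direction. Hence \ref{lem:4.1} yields an isometry
\[
(\tilde M, \tilde g) \;\cong\; \bigl(\mathbb R \times N,\; dt^{2}+e^{2f(t)} h_N\bigr),
\]
with $(N, h_N)$ complete and $\tilde\eta = df$. Since $|\eta|_g \equiv 1$, one has $|df|_{\tilde g}=1$, so after a translation of $t$ we may take $f(t)=t$ and $\tilde\xi = \partial_t$. Next, the transitive $G$-action on $M$ lifts to a transitive action of a connected Lie group $\tilde G$ on $\tilde M$ preserving the full Kenmotsu structure, in particular $\partial_t$. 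Every such isometry has the form $(t,q)\mapsto(t+c,\psi(q))$ with $\psi$ a similarity of $(N,h_N)$ of ratio $e^{-c}$, and transitivity on the $\mathbb R$-factor forces every $c\in\mathbb R$ to appear. Thus $(N,h_N)$ admits a similarity of ratio $<1$, and \ref{lem:4.7} gives $N\cong\mathbb R^{2m}$ with $\dim M=2m+1$. Consequently $\tilde M$ is isometric to $\mathbb R\times_{e^{t}}\mathbb R^{2m}\cong\mathbb H^{2m+1}$, which is diffeomorphic to $\mathbb R^{2m+1}$ and hence contractible.

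To finish, I would show $\Gamma:=\pi_{1}(M)=\{1\}$, so $M=\tilde M$. Being a discrete normal subgroup of the connected Lie group $\tilde G$, $\Gamma$ lies in the centre $Z(\tilde G)$. Parametrising an element of $\mathrm{Isom}_{\tilde\xi}(\tilde M)\cap\mathrm{Aut}(\tilde\varphi)$ by $(c,A,b)\in\mathbb R\times U(m)\times\mathbb R^{2m}$, the centrality relation $\gamma h=h\gamma$ applied to $\gamma=(c_\gamma,A_\gamma,b_\gamma)$ and $h=(c,A,b)\in\tilde G$ becomes
\[
A_\gamma A=AA_\gamma,\qquad (I-e^{-c}A)\,b_\gamma=(I-e^{-c_\gamma}A_\gamma)\,b.
\]
Using transitivity, one shows that as $h$ ranges over $\tilde G$ the parameters $b$ sweep a subset large enough to force $I-e^{-c_\gamma}A_\gamma=0$; since $A_\gamma\in U(m)$ has spectral radius $1$ and $e^{-c_\gamma}>0$, this yields $c_\gamma=0$ and $A_\gamma=I$. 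The remaining equation reduces to $b_\gamma=e^{-c}Ab_\gamma$, and picking $h\in\tilde G$ with $c\neq 0$ (which exists by transitivity on the $\mathbb R$-factor) makes $I-e^{-c}A$ invertible, forcing $b_\gamma=0$. Hence $\Gamma=\{1\}$ and $M\simeq\mathbb R^{2m+1}$ is contractible.

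The main obstacle is the penultimate step, where transitivity is exploited to pin down $I-e^{-c_\gamma}A_\gamma=0$. This requires a structural analysis of the connected transitive Lie subgroups of the Kenmotsu isometry group $(\mathbb R\times U(m))\ltimes\mathbb R^{2m}$ of $\mathbb H^{2m+1}$: one must show via a Lie-algebraic computation that any such subgroup admits enough translational variation in $\mathbb R^{2m}$ for the centralizer equation to pin down $A_\gamma$ and $c_\gamma$. Once this is established, $Z(\tilde G)=\{1\}$ and the theorem follows.
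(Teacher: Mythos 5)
Your proposal is essentially correct, but be aware that the paper does not prove this statement: it is imported verbatim from \cite{L2023}, and the closest in-house argument is \ref{thm:5.5}, which establishes the stronger conclusion that $(M,g)$ is isometric to real hyperbolic space. Your Step~1 (splitting $\tilde M$ via \ref{lem:4.1}, using transitivity to produce a strict similarity of $(N,h_N)$, and invoking \ref{lem:4.7}) runs parallel to Step~1 of \ref{thm:5.5}; your shortcut $f(t)=t$ is legitimate here because $\|\theta\|=\|\eta\|=1$ in the Kenmotsu case, whereas the paper must work harder (Ricci computations) only because it treats general $\alpha$. The genuine divergence is in proving simple connectivity: the paper deduces it from Kobayashi's theorem (\ref{thm:K1962}) once $M$ is known to be locally hyperbolic, while you compute the centralizer of the deck group inside $(\mathbb{R}\times U(m))\ltimes\mathbb{R}^{2m}$. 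Both routes are valid; Kobayashi's is shorter and avoids identifying the isometry group, yours is elementary, self-contained, and in fact re-proves the full conjecture rather than mere contractibility.

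The step you flag as the main obstacle closes immediately and requires no structural analysis of transitive subgroups. Evaluating $h(0,0)=(c_h,b_h)$ shows that transitivity of $\tilde G$ forces the pair $(c_h,b_h)$ to sweep out all of $\mathbb{R}\times\mathbb{R}^{2m}$; choosing elements with $c_h=0$ and $|b_h|\to\infty$, the left-hand side of your second centralizer equation is bounded by $2\,|b_\gamma|$ while the right-hand side grows linearly in $|b_h|$ unless $I-e^{-c_\gamma}A_\gamma=0$, which then yields $c_\gamma=0$, $A_\gamma=I$, and finally $b_\gamma=0$ exactly as you indicate. One small point of hygiene: $\Gamma$ is normal in the full group of lifts, which need not be connected; pass to its identity component (which still acts transitively on $\tilde M$) before concluding that conjugation on the discrete group $\Gamma$ is trivial.
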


\ref{thm:L2023} shows that the topology of a homogeneous Kenmotsu manifold is severely restricted: the space must be contractible.
Our next result shows that, once this topological constraint is combined with the universal cover description (see~\ref{lem:4.1}), the geometry becomes completely rigid.
Specifically, the function $f$ in~\ref{lem:4.1} must be linear: $f(t)=\alpha\,t$ with constant $\alpha$.
Consequently, the underlying Riemannian manifold is isometric to $(\mathbb{R} \times \mathbb{R}^{n-1}, g_{(t,x)} = dt^2 + e^{2 \alpha \,t} g_{\mathrm{eucl}})$.
To connect this with the standard model, we take the upper half-plane model of $\mathbb{H}(-\alpha^2)$, whose hyperbolic metric is $\frac{dr^2 + dx^2 }{\alpha^2 r^2}$ in coordinates $(r, x)\in \mathbb{R}^+ \times\mathbb{R}^{n-1}$.
Setting $t = - \log (r)$ yields the metric $dt^2 + e^{2 \alpha \,t} |dx|^2$.
This well-known warped product description of the real hyperbolic space complements the homogeneous model presented in~\cite[Sec.~3.1]{CGS2009}.

To conclude, we extend the argument to general closed Weyl structures that are reducible in the direction of the Lee form, not just to the Kenmotsu case.

\begin{theorem}\label{thm:5.5}
    Let $(M,g)$ be a Riemannian manifold and $\nabla$ a closed Weyl structure reducible in the direction of the Lee form $\theta$ with a codimension $1$ distribution $\mathcal{D}$, see~\ref{def:3.1}.
    Suppose a Lie group $G\subset \mathrm{Isom}(M,g)$ acts transitively on $M$ and preserves $\mathcal{D}$.
    Then $(M,g)$ is isometric to the real hyperbolic space with constant sectional curvature $-\|\theta\|^2$.
\end{theorem}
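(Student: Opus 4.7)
The overall strategy is to rigidify the warped-product description of the universal cover from~\ref{lem:4.1} using the transitive isometric action, and then apply~\ref{lem:4.7} to recognize Euclidean space as the fibre.

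The first step is to promote the $G$-invariance of the pair $(g,\mathcal D)$ to $G$-invariance of the entire Weyl connection $\nabla$, and in particular of the Lee form $\theta$. The key observation is that $\nabla$ is \emph{uniquely determined} by $(g,\mathcal D)$: letting $\xi$ be a local unit section of $\mathcal I = \mathcal D^{\perp}$ and $\eta = \xi^{\flat}$, the identity $\nabla\xi = -\theta\otimes\xi$ (forced by $\mathcal I$ being $\nabla$-parallel and $\xi$ being unit) combined with~\eqref{eq:2.1} gives
\[
\nabla^{g}_{X}\xi \;=\; F\bigl(X-\eta(X)\,\xi\bigr),
\]
where $\theta = F\,\eta$. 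Since $\nabla^{g}$, $\xi$ and the projection onto $\mathcal D$ are determined by $(g,\mathcal D)$, so is $F$, hence $\theta$. Thus $\|\theta\|$ is $G$-invariant, and by transitivity it is a positive constant $F_{0}$. Homogeneity implies $M$ is complete, so~\ref{lem:4.1} gives $(\tilde M,\tilde g)\cong(\mathbb R\times N,\;dt^{2}+e^{2f(t)}h_{N})$ with $f'(t)\equiv F_{0}$; after translating $t$ we may take $f(t)=F_{0}t$.

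The second step is to identify $(N,h_{N})$ with Euclidean space. The transitive action of $G$ on $M$ lifts, together with deck transformations, to a transitive isometric action of a group $\tilde G$ on $\tilde M$ preserving $\tilde{\mathcal D}$. Any isometry of $\tilde g$ preserving $\tilde{\mathcal D}$ has the form $(t,q)\mapsto(\tau(t),\varphi(q))$ with $\tau'(t)=\pm 1$; the case $\tau'(t)=-1$ is excluded because the isometry equation would force $e^{-4F_{0}t}$ to be constant, contradicting $F_{0}\neq 0$. Hence $\tau(t)=t+c$ and the isometry condition reduces to $\varphi^{*}h_{N}=e^{-2F_{0}c}h_{N}$, so $\varphi$ is a similarity of $(N,h_{N})$ of ratio $e^{-F_{0}c}$. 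Applying transitivity of $\tilde G$ to a pair of points of the form $(0,q_{0})$ and $(c,q_{0})$ with $c>0$ produces a contracting similarity of the complete manifold $(N,h_{N})$, and~\ref{lem:4.7} yields $(N,h_{N})\cong(\mathbb R^{n-1},g_{\mathrm{eucl}})$.

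Assembling the two steps gives $(\tilde M,\tilde g)\cong(\mathbb R\times\mathbb R^{n-1},\;dt^{2}+e^{2F_{0}t}g_{\mathrm{eucl}})$, which is the standard warped-product model of the real hyperbolic space with constant sectional curvature $-F_{0}^{2}=-\|\theta\|^{2}$ (the substitution $y=e^{-F_{0}t}/F_{0}$ converts it to the upper half-space model). Finally, to descend to $M$, I would argue that since $M$ is homogeneous and locally isometric to $\mathbb H^{n}$, the connected transitive lift of $\mathrm{Isom}(M)^{0}$ to $\mathrm{Isom}(\mathbb H^{n})$ must normalize, and by discreteness centralize, $\pi_{1}(M)$; a standard case analysis on the type of a nontrivial freely acting discrete element $\gamma\in\pi_{1}(M)$ (hyperbolic or parabolic, since elliptic elements have fixed points) shows that its centralizer in $\mathrm{Isom}(\mathbb H^{n})$ is not transitive on $\mathbb H^{n}$. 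Hence $\pi_{1}(M)$ is trivial and $(M,g)\cong\mathbb H^{n}$.

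The main obstacle is the very first step: recovering $\theta$ (and hence $\nabla$) intrinsically from $(g,\mathcal D)$, since the hypothesis only provides $G$-invariance of $\mathcal D$. Once uniqueness of $\nabla$ from $(g,\mathcal D)$ is established, the rest of the argument is a direct application of the warped-product formalism developed in Section~\ref{sec:main}, together with~\ref{lem:4.7}.
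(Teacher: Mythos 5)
Your proposal is correct and follows the same overall skeleton as the paper (warped-product splitting via \ref{lem:4.1}, a strict similarity of the fibre produced from transitivity, \ref{lem:4.7} to recognize $\mathbb{R}^{n-1}$, then a simple-connectivity step), but two of the key sub-arguments are genuinely different. First, to show that the warping function is linear, you observe that $\nabla$ (hence $\theta$) is canonically recovered from $(g,\mathcal{D})$ via $\nabla^g_X\xi = F\bigl(X-\eta(X)\xi\bigr)$, so that $\|\theta\|$ is $G$-invariant and therefore constant \emph{before} the splitting is even used; the paper instead keeps $\alpha=\pm\|\theta\|$ a priori non-constant, and extracts its constancy a posteriori from O'Neill's warped-product curvature formulas, comparing $\mathrm{Ric}_{tt}$ with $\mathrm{Scal}$ for $n\ge 3$ and running a separate ad hoc computation for $n=2$. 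Your route is cleaner and dimension-independent, and it also makes explicit a fact the paper only uses implicitly, namely that any isometry preserving $\mathcal{D}$ automatically preserves the Weyl structure. Second, for simple connectivity, you lift the transitive action to $\tilde{M}\cong\mathbb{H}^n$, note that the connected lift centralizes the (discrete) deck group, and rule out nontrivial hyperbolic or parabolic elements because their centralizers in $\mathrm{Isom}(\mathbb{H}^n)$ preserve an axis or a point at infinity and so cannot act transitively; the paper instead invokes Kobayashi's theorem (\ref{thm:K1962}) that a homogeneous manifold with non-positive sectional curvature and negative definite Ricci tensor is simply connected. Your centralizer argument is self-contained but is only sketched (the transitivity and connectedness of the lifted group deserve a sentence of justification), whereas the citation is shorter and avoids the case analysis; both are valid. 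The remaining steps (exclusion of $\tau'=-1$ using $F_0\neq 0$, the contraction fixed-point input to \ref{lem:4.7}, and the identification of $dt^2+e^{2F_0t}g_{\mathrm{eucl}}$ with $\mathbb{H}^n(-F_0^2)$) match the paper's proof essentially verbatim.
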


\begin{proof}
    Since $(M,g)$ is homogeneous, it is complete.
    Consequently, the Weyl structure $\nabla$ satisfies the hypotheses of~\ref{thm:1.1}; hence it is flat or exact.

    Our proof proceeds in two steps:
    \begin{itemize}
        \item[(1)] If $(M,g)$ is simply connected, then $(M,g)$ is isometric to the real hyperbolic space $\mathbb{H}(- \|\theta\|^2)$, whose sectional curvature equals $- \|\theta\|^2$.
        \item[(2)] Show that $(M,g)$ is simply connected.
        \item[] 
    \end{itemize}
    Note that~\ref{thm:L2023} already covers Step~2 in the Kenmotsu case.

    Step 1.~Since $M$ is simply connected and $\nabla$ is closed, we have that $\nabla$ is exact; that is, $\nabla$ is the Levi-Civita connection of $e^{-2f}g$.
    \ref{lem:4.1} states that $(M,g)$ is isometric to $(\mathbb{R} \times N, g = dt^2 + e^{2f}h)$ for some complete manifold $(N,h)$, where $df =\theta$.
    Recall that $N$ is an integral leaf of $\mathcal{D}$, see~\ref{thm:PG1993}.
    In particular,~\ref{lem:4.1} ensures the existence of a globally defined unit $1$-form $dt$ proportional to $\theta$.
    Thus, we can write $\theta = \alpha\,dt$, where $\alpha \coloneqq g(\theta^\sharp, (dt)^\sharp) \in \mathcal{C}^{\infty}(M)$.
    Note that $\alpha$ equals either  $- \| \theta \|$ or $\| \theta \|$, and $\alpha$ is not necessarily constant.
    
    Let $G \subset \mathrm{Isom}(M,g)$ be a Lie group acting transitively on $M$ and preserving $\mathcal{D}$.
    Let $t$, $s\in \mathbb{R}$ and $x \in N$.
    Using the transitivity condition, there exists $\Gamma \in G$ such that $\Gamma(t,x) = (s,x)$.
    Next, because $G$ preserves $\mathcal{D}$, we have $\Gamma^*dt$ equals either $dt$ or $-dt$, and $\Gamma_* \mathcal{D} = \mathcal{D}$.
    Then since $\Gamma$ is an isometry, we obtain
    \[
    dt^2 _{(t,x)} + e^{2f(t)} h_{x} \; =\; g_{(t,x)} = (\Gamma^*g)_{(t,x)} \;=\; dt^2_{(t,x)} + e^{2f(s)} (\Gamma^*h) _{x}.
    \]
    This implies that $(\Gamma^* h)_x= e^{2(f(t)-f(s))} h_x$,
    which means that $\varphi_{(t,s)} \coloneqq \Gamma|_N$ is a similarity of $(N,h)$ of ratio $e^{2(f(t)-f(s))}$.
    Since the Lee form is non-zero (see~\ref{def:3.1}), $f$ cannot be constant.
    Thus, there exist $\hat{r}$, $\hat{s} \in \mathbb{R}$ with $f(\hat{r}) \neq f(\hat{s})$; hence either $\varphi_{(\hat{r},\hat{s})}$ or $\varphi_{(\hat{r},\hat{s})} ^{-1}$ satisfies the conditions of~\ref{lem:4.7} and $(N,h)$ is isometric to $\mathbb{R} ^{n-1}$ with $n = \dim M$.
    Therefore, $(M,g)$ is isometric to
    \[
    (\mathbb{R} \times \mathbb{R}^{n-1}, g_{(t,x)} = dt^2 + e^{2 f(t)} g_{\mathrm{eucl}}).
    \]
    We now compute the Ricci and scalar curvatures of $(M,g)$ to derive the restrictions on $\alpha$ imposed by homogeneity.
    We claim that $\alpha$ is constant and non-zero.
    Since the Lee form is non-zero, $\alpha$ cannot be identically zero.
    Let us take a global coordinate system $(t, x_1 , \ldots x_{n-1})$.
    Following~\cite[p.~211, Cor.~43]{O1983}, we obtain
    \[
    \mathrm{Ric}_{tt} = - (n-1) \frac{\ddot{s}}{s}, \quad \mathrm{Ric}_{ij} = - \left( \frac{\ddot{s}}{s} + (n-2)\left(\frac{\dot{s}}{s}\right) ^2 \right) \delta_{ij},\quad  \forall i,\,j \in \{ 1, \ldots, n-1\},
    \]
    where $s(t) = e^{f(t)}$.
    Hence, the scalar curvature is constant on $M$ and equals
    \[
    \mathrm{Scal}(M,g) = - (n-1) \left( 2\,\frac{\ddot{s}}{s} + (n-2)\left(\frac{\dot{s}}{s}\right) ^2 \right).
    \]
    Substituting $s(t) = e^{f(t)}$ and $df = \alpha \,dt$ in the last formula, we obtain 
    \begin{equation}\label{eq:5.2}
    \mathrm{Scal}(M,g) = - (n-1) \left(  2\dot{\alpha} +  n \alpha^2 \right).
    \end{equation}
    Since $dt$ is globally defined and $\Gamma^* dt = \pm dt$ for all $\Gamma \in G$, it follows that $\mathrm{Ric}_{tt}$ is constant on $M$; hence
    \begin{equation}\label{eq:5.3}
        \mathrm{Ric}_{tt} = - (n-1) \left(  \dot{\alpha} +   \alpha^2 \right).
    \end{equation}
    When $n \geq 3$, comparing~\eqref{eq:5.2} with~\eqref{eq:5.3} shows that $\alpha^2$ is constant on $M$ and hence $\alpha$ is also constant.
    For $n = 2$, we may write $(M,g) \cong (\mathbb{R}^2,\; g = dt^2 + s(t)^2\,dx^2)$, with  $s(t)=e^{f(t)}$ and  $\alpha(t)=\tfrac{\dot s(t)}{s(t)}$.
    Let $E_{(t,x)} \coloneq \frac{1}{s(t)} \partial_x$ be the globally defined and unit vector field spanning the distribution $\mathcal{D}$.
    A direct computation (see \cite[Ch.~7, Prop.~35]{O1983}) gives
    \begin{equation}\label{eq:shape}
        g(\nabla^g_E\partial_t,E)\; =\;\frac{\dot s}{s}\;=\;\alpha.
    \end{equation}
    Since $G$ preserves $\mathcal D$ and the metric, for every $\Gamma\in G$ we have $\Gamma_*E=\pm E$ and $\Gamma_*\partial_t=\pm\partial_t$; thus, by \eqref{eq:shape},
    \[
        \alpha\circ\Gamma \;=\;g(\nabla^g_{\Gamma_*E}\Gamma_*\partial_t,\Gamma_*E)\;=\;\pm g(\nabla^g_E\partial_t,E)
        \;=\;\pm\alpha,
    \]
    so $|\alpha|$ is $G$–invariant and therefore constant on $M$.
    Thus $\alpha$ is constant on $M$.
    Hence, independently of the dimension, $(M,g)$ is isometric to the warped product manifold
    \[
    (\mathbb{R} \times \mathbb{R}^{n-1}, g_{(t,x)} = dt^2 + e^{2 \alpha \,t} g_{\mathrm{eucl}}),
    \]
    which is the real hyperbolic space of sectional curvature $-\alpha^2$.
    
    Step 2.~Recall that every Riemannian homogeneous manifold is locally isometric to its universal cover; thus $(M,g)$ is locally isometric to the real hyperbolic space.
    In particular, $(M,g)$ has negative sectional curvature and negative definite Ricci curvature.
    Therefore, $(M,g)$ satisfies the hypotheses of the following result:
    \begin{theorem}[\cite{K1962}] \label{thm:K1962}
        Let $(M,g)$ be a homogeneous Riemannian manifold with non-positive sectional curvature and negative definite Ricci tensor.
        Then $M$ is simply connected.
    \end{theorem}
    \ref{thm:K1962} of S.~Kobayashi therefore forces $(M,g)$ to be simply connected.
    Hence, $\nabla$ must be exact, so $(M,g)$ is globally isometric to the real hyperbolic space.
\end{proof}

In the $\alpha$-Kenmotsu setting we obtain:

\begin{corollary}\label{cor:5.7}
    Every homogeneous $\alpha$-Kenmotsu manifold is isometric to the real hyperbolic space with constant sectional curvature $- \alpha^2$.
\end{corollary}

\begin{proof}
    Let $(M,g,\varphi,\xi, \eta)$ be a homogeneous $\alpha$-Kenmotsu manifold and let $G$ be a Lie group acting transitively on $M$ and preserving $\varphi$, $\xi$, and $\eta$.
    From the definition of an $\alpha$-Kenmotsu manifold, we have
    \[
       \nabla^g _X \xi= \alpha \bigl(X - \eta(X)\,\xi \bigr),
    \]
    for every $X\in \mathfrak{X}(M)$. Consequently, $\alpha$ is determined by the $\alpha$-Kenmotsu structure, which is $G$-invariant, so $\alpha$ is $G$-invariant as well. Since the action is transitive, $\alpha$ is constant on $M$. Finally, the Weyl structure $\nabla$ constructed in~\ref{ex:3.3} satisfies the hypotheses of~\ref{thm:1.1}.
    Moreover, since $(M,g,\varphi,\xi,\eta)$ is homogeneous, $(M,g)$ together with $\nabla$ satisfies the hypotheses of~\ref{thm:5.5}. The result follows.
\end{proof}

\printbibliography

\enlargethispage{3\baselineskip}

\end{document}